\definecolor{darkblue}{rgb}{0,0,0.6}
\newcounter{commentcounter}
\newtheoremstyle{thms}
	{}{}{\itshape}{}{\bfseries }{}{ }
	{\thmname{#1} \thmnumber{#2}. \thmnote{\bfseries{[#3]}}}
\newtheoremstyle{name}
	{}{}{\itshape}{}{\bfseries }{}{ }
	{\thmname{#1}\thmnumber{#2}\thmnote{\bfseries{[#3]}}}
\newtheoremstyle{defs}
	{}{12pt}{\normalfont}{}{\bfseries }{}{ }
	{\thmname{#1} \thmnumber{#2}. \thmnote{\bfseries{(#3)}}}
\newtheoremstyle{defs2}
	{}{12pt}{\normalfont}{}{\bfseries }{}{ }
	{\thmname{#1}\thmnumber{#2}. \thmnote{\bfseries{(#3)}}}
\newtheoremstyle{rmk}
	{}{}{\normalfont}{}{\itshape }{}{ }{\thmname{#1}. \thmnote{#3}}
\newtheoremstyle{claim}
	{}{}{\normalfont}{}{\itshape}{}{ }{\thmname{#1} \thmnumber{#2}. \thmnote{#3}}
\theoremstyle{thms}
\newtheorem{Prop}{Proposition}
\newtheorem{Thm}[Prop]{Theorem}
\newtheorem*{Thm*}{Theorem}
\newtheorem{Lemma}[Prop]{Lemma}
\newtheorem{Cor}[Prop]{Corollary}
\theoremstyle{name}
\theoremstyle{defs2}
\newtheorem*{ackn}{Acknowledgements}
\theoremstyle{defs}
\newtheorem{Def}[Prop]{Definition}
\theoremstyle{rmk}
\newtheorem{Rmk}{Remark}
\theoremstyle{rmk}
\theoremstyle{rmk}
\theoremstyle{claim}
\renewcommand{\c}{\mathscr{C}}
\newcommand{\D}{\mathscr{D}}
\newcommand{\Sp}{\mathrm{Sp}}
\newcommand{\Fun}{\mathrm{Fun}}
\newcommand{\Grp}{\mathrm{Grp}}
\newcommand{\id}{\mathrm{id}}
\newcommand{\wh}{\widehat}
\newcommand{\diag}{\mathrm{diag}}
\newcommand{\const}{\operatorname{const}}
\newcommand{\ev}{\operatorname{ev}}
\newcommand{\Gpd}{\mathrm{Gpd}}
\newcommand{\cc}{\mathrm{C^*Alg}}
\newcommand{\KK}{\mathrm{KK}}
\newcommand{\G}{\mathcal{G}}
\newcommand{\C}{\mathbb{C}}
\newcommand{\End}{\mathrm{End}}
\newcommand{\F}{\mathcal{F}}
\newcommand{\M}{\mathscr{M}}
\newcommand{\xto}{\xrightarrow}
\newcommand{\Orb}{\mathrm{Orb}}
\newcommand{\N}{\mathrm{N}}
\newcommand{\A}{\mathscr{A}}
\newcommand{\Reedy}{\mathrm{R}}
\newcommand{\we}{\mathrel\sim}
\newcommand{\weto}{\stackrel{\we}{\to}}
\newcommand{\cto}{\rightarrowtail}
\newcommand{\acto}{\stackrel{\we}{\cto}}
\newcommand{\Ncd}{\mathrm{N}^\mathrm{R}}
\renewcommand{\tilde}{\widetilde}
\renewcommand{\hat}{\widehat}
\newcommand{\cof}{c\:\!}
\newenvironment{tikzeq*}
{
  \begingroup
  \begin{equation*}
  \begin{tikzpicture}[baseline=(current bounding box.center)]
}
{
  \end{tikzpicture}
  \end{equation*}
  \endgroup
  \ignorespacesafterend
}
\tikzset
{
  diagram/.style=
  {
    matrix of math nodes,
    column sep=2.5em,
    row sep=2.5em,
    text height=1.5ex,
    text depth=.25ex
  },
  over/.style={preaction={draw=white,-,line width=6pt}},
  every to/.style={font=\footnotesize},
  cof/.style={>->},                
  fib/.style={->>},                
  inj/.style={right hook->},       
  surj/.style={-open triangle 60}, 
  eq/.style={-,double distance=1.7pt}
}
\begin{document}

\title{Localization of Cofibration Categories and Groupoid $C^*$-algebras}
\date{\today}

\author[M.~Land]{Markus Land}
\address{University of Bonn, Mathematical Institute, Endenicher Allee 60, 53115 Bonn}
\email{land@math.uni-bonn.de}

\author[T.~Nikolaus]{Thomas Nikolaus}
\address{Max Planck Institut f\"ur Mathematik Bonn, Vivatsgasse 7, 53111 Bonn}
\email{thoni@mpim-bonn.mpg.de}

\author[K.~Szumi{\l}o]{Karol Szumi{\l}o}
\address{University of Western Ontario, Department of Mathematics, 1151 Richmond Street, N6A 3K7 London, ON}
\email{kszumilo@uwo.ca}

\begin{abstract}
We prove that relative functors out of a cofibration category are essentially the same as relative functors which are only defined on the subcategory of cofibrations. As an application we give a new construction of the functor that assigns to a groupoid its groupoid $C^*$-algebra and thereby its topological $K$-theory spectrum.
\end{abstract}

\maketitle

Let $(\c,w\c,\cof\c)$ be a \emph{cofibration category},
i.e.\ a structure dual to a category of fibrant objects in the sense of Brown \cite{Brown}.
Here, $w\c$ and $\cof\c$ are the subcategories of weak equivalences and cofibrations,
i.e.\ they have the same objects as $\c$ but morphisms are the weak equivalences
or the cofibrations respectively.
Similarly, $w\cof\c$ will denote the subcategory of acyclic cofibrations.
In~addition to Brown's axioms, we will assume that $\c$ has \emph{good cylinders}
which is a mild technical condition explained in \cref{good-cylinders}.
In this paper we will prove the following theorem.

\begin{Thm}\label{main theorem}
If a cofibration category $\c$ has good cylinders, then the map induced by the inclusion
\begin{tikzeq*}
  \matrix[diagram]
  {
    |(cC)| \N \cof\c[w\cof^{-1}] & |(C)| \N\c[w^{-1}] \\
  };

  \draw[->] (cC) to node[above] {$\simeq$} (C);
\end{tikzeq*}
is an equivalence of $\infty$-categories. In particular, by passing to homotopy categories, we obtain an equivalence of ordinary categories $\cof\c[w\cof^{-1}] \xto{\simeq} \c[w^{-1}] $.
\end{Thm}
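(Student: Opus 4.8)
The plan is to deduce the statement from its \emph{relative functor} formulation. By the universal property of localizations and the Yoneda lemma, the claimed equivalence $\N\cof\c[w\cof^{-1}] \to \N\c[w^{-1}]$ holds if and only if, for every $\infty$-category $\D$, restriction along the inclusion $\cof\c \hookrightarrow \c$ induces an equivalence between the $\infty$-category of functors $\N\c \to \D$ sending $w\c$ to equivalences and the $\infty$-category of functors $\N\cof\c \to \D$ sending $w\cof\c$ to equivalences. So I would prove that relative functors out of $\c$ agree with relative functors out of $\cof\c$, by exhibiting an explicit inverse to restriction, i.e.\ by extending relative functors along $\cof\c \hookrightarrow \c$.

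The tool for the extension is the mapping cylinder. Using a good cylinder $A \sqcup A \cto IA \weto A$, I would first record the lemma that every morphism $f\colon A \to B$ of $\c$ factors as $A \xto{j} Zf \xto{q} B$, where $Zf = (B \sqcup A) \cup_{A \sqcup A} IA$, the map $j$ is a cofibration which moreover lies in $w\cof\c$ whenever $f \in w\c$, and $q$ is a weak equivalence admitting a section $b\colon B \cto Zf$ which is an acyclic cofibration; here $qj = f$ and $qb = \id_B$, and there is a homotopy $bf \simeq j$ given by the structure map $IA \to Zf$. All of this is read off from Brown's axioms and the good cylinder hypothesis by inspecting the pushout defining $Zf$ (the point of ``good'' being that $A \sqcup A \cto IA$ is a cofibration, so that $Zf$ is a pushout along a cofibration). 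Now, given a relative functor $F_0$ on $\cof\c$, set $F(f) := F_0(b)^{-1} \circ F_0(j)$; this is defined since $b$ and $j$ are cofibrations, $F(f)$ is an equivalence when $f \in w\c$ because then $j \in w\cof\c$, and for $f$ itself a cofibration the homotopy $bf \simeq j$ — which $F_0$ detects because the cylinder inclusions $A \to IA$ lie in $w\cof\c$ — gives $F_0(j) \simeq F_0(b)F_0(f)$, hence $F(f) \simeq F_0(f)$; so $F$ is a relative functor on $\c$ restricting to $F_0$. In the other direction, if $G$ is any relative functor on $\c$ then $G$ inverts $q$, so $G(b) = G(q)^{-1}$ and $G(b)^{-1}G(j) = G(q)G(j) = G(f)$, showing that the extension of $G|_{\cof\c}$ recovers $G$. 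Hence $F_0 \mapsto F$ is an inverse to restriction, at least up to homotopy.

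The main obstacle is coherence: the mapping cylinder is not a strict functor — it depends on a choice of good cylinder, and $Z(gf)$ is only weakly equivalent to an iterated mapping cylinder — so the arguments above produce the extended functor and the homotopies $\mathrm{ext}\circ\mathrm{res}\simeq\id$, $\mathrm{res}\circ\mathrm{ext}\simeq\id$ only up to homotopy, whereas one needs a genuine equivalence of functor $\infty$-categories. Upgrading this amounts to organizing compatible cylinder choices over arbitrary finite diagrams and assembling the data simplex by simplex (equivalently, resolving $\c$ by a category with functorial cylinders), and this is precisely where the good cylinder axiom does its work. Once the $\infty$-categorical equivalence is in hand, the last sentence of the theorem is immediate: passing to homotopy categories takes equivalences of $\infty$-categories to equivalences of ordinary categories, and $\Ho(\N\D[S^{-1}]) \cong \D[S^{-1}]$ for any category $\D$ with a subcategory of weak equivalences $S$.
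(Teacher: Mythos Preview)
Your approach is genuinely different from the paper's, and the gap you yourself flag is real and not closed. You reduce the theorem, via Yoneda, to showing that restriction along $\cof\c\hookrightarrow\c$ is an equivalence on relative functor $\infty$-categories, and then propose an explicit inverse using the mapping cylinder factorization $f = q j$ with section $b$. The pointwise formula $F(f)=F_0(b)^{-1}F_0(j)$ and the homotopy $bf\simeq j$ are fine, but they produce only a rule on $1$-morphisms together with noncanonical $2$-cells; you have not produced a functor $\N\c\to\D$, nor natural equivalences $\mathrm{ext}\circ\mathrm{res}\simeq\id$ and $\mathrm{res}\circ\mathrm{ext}\simeq\id$ as points of the relevant mapping spaces. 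Your closing paragraph names the problem accurately---one must organize cylinder choices coherently over all finite composites---but the sentence ``this is precisely where the good cylinder axiom does its work'' is an assertion, not an argument. Good cylinders guarantee that the mapping cylinder factorization is suitably compatible with cofibrations in the \emph{source} (this is Lemma~\ref{levelwise-good}(2) in the paper), but that alone does not assemble your pointwise data into a simplicial map out of $\N\c$; something like a functorial resolution of $\c$ by a category with strictly functorial cylinders, or a bar/cobar construction, would be needed, and none is supplied.

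The paper avoids this coherence bookkeeping entirely by working in Rezk's complete Segal space model and comparing classification diagrams directly: it shows that for each $k$ the map $\N w((\cof\c)^{[k]})\to\N w(\c^{[k]})$ is a weak homotopy equivalence of simplicial sets. The device is a bisimplicial set $W\c$ built from grids whose rows are acyclic cofibrations and whose columns lie in a chosen ``good'' subcategory; one shows $W\c$ is homotopically constant in both directions, which forces $\N w\cof\c\hookrightarrow\N wg\c$ to be a weak equivalence. The good cylinder hypothesis enters only to verify that levelwise cofibrations form a good subcategory of the Reedy diagram category $\c^{[k]}_\Reedy$, which is exactly the step that handles all $k$ at once. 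Thus the paper's argument is a nerve-level comparison rather than a functor-extension argument, and the coherence that defeats your sketch is absorbed into the bisimplicial machinery.
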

By $\N\c[w^{-1}]$ we denote the universal $\infty$-category obtained from $\N\c$ by inverting the weak equivalences, see \cite[Def. 13.4.1 and Remark 1.3.4.2]{LurieHA}. The same universal property in the world of ordinary categories describes $\c[w^{-1}]$. By passing to opposite categories, the dual statement of \cref{main theorem} for fibration categories also holds.

The proof of \cref{main theorem} will be given at the end of the paper, but let us first establish a consequence and the application to $C^*$-algebras associated to groupoids.

Let $\c$ be a small cofibration category with good cylinders and
$\M$ a model category which is Quillen equivalent to a combinatorial model category
and has functorial fibrant and cofibrant replacements, e.g.\ any of the model categories of spectra.
\begin{Prop}\label{prop}
For any functor $F\colon \cof\c \to \M$ that sends acyclic cofibrations in $\cof\c$ to weak equivalences in $\M$ there exists a functor $\wh{F}\colon \c \to \M$ with the following properties:
\begin{enumerate}
\item\label{eins} $\wh{F}$ sends weak equivalences in $\c$ to weak equivalences in $\M$. 
\item\label{zwei} $\wh{F}$ extends $F$ in the sense that there exists a zig-zag of natural weak equivalences between $F$ and $\wh{F}\mid_{\cof\c}$.
\end{enumerate}
Moreover $\wh{F}$ is unique in the following sense: for any other functor $\wh{F}': \c \to \M$ that satisfies \eqref{eins} and \eqref{zwei} there exists a zig-zag of natural weak equivalences between
$\wh{F}$ and $\wh{F}'$.
\end{Prop}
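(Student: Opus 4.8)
The plan is to deduce \cref{prop} from \cref{main theorem} by a rectification argument. Write $\M_\infty:=\N\M[w^{-1}]$ for the $\infty$-categorical localization. Apart from \cref{main theorem}, the key input is the following standard fact about $\M$: for every small category $D$ the projective model structure on $\Fun(D,\M)$ exists, and the comparison functor
\[
  \N\bigl(\Fun(D,\M)\bigr)\bigl[W^{-1}\bigr]\xrightarrow{\simeq}\Fun(\N D,\M_\infty)
\]
is an equivalence of $\infty$-categories, where $W$ denotes the class of pointwise (equivalently, natural) weak equivalences; under this equivalence a functor $G\colon D\to\M$ corresponds to the composite $\N D\xrightarrow{\N G}\N\M\to\M_\infty$. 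For combinatorial $\M$ this is a theorem of Lurie (see \cite{LurieHA}), and the extension to the generality assumed here follows by transporting along a zig-zag of Quillen equivalences to a combinatorial model category, which is where the hypotheses on $\M$ (in particular the functorial replacements) enter. We also use two elementary facts: a morphism of a model category is a weak equivalence if and only if its image in the $\infty$-categorical localization is an equivalence, and two objects of a model category that become equivalent in its localization are connected by a finite zig-zag of weak equivalences (replace them by bifibrant objects).

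\emph{Existence of $\wh F$ and property \eqref{eins}.} Since $F$ sends acyclic cofibrations to weak equivalences, the composite $\N\cof\c\xrightarrow{\N F}\N\M\to\M_\infty$ inverts $w\cof\c$ and hence factors, uniquely up to contractible choice, through a functor $f\colon\N\cof\c[w\cof^{-1}]\to\M_\infty$. Precomposing $f$ with an inverse of the equivalence $\N\cof\c[w\cof^{-1}]\xrightarrow{\simeq}\N\c[w^{-1}]$ of \cref{main theorem} yields a functor $\N\c[w^{-1}]\to\M_\infty$, equivalently a functor $\bar g\colon\N\c\to\M_\infty$ that inverts $w\c$. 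By the comparison equivalence with $D=\c$ we may choose $\wh F\in\Fun(\c,\M)$ whose image in $\Fun(\N\c,\M_\infty)$ is equivalent to $\bar g$. Then for every $\alpha$ in $w\c$ the image of $\wh F(\alpha)$ in $\M_\infty$ is an equivalence, so $\wh F(\alpha)$ is a weak equivalence in $\M$; this is \eqref{eins}.

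\emph{Property \eqref{zwei} and uniqueness.} The image of $\wh F\mid_{\cof\c}$ in $\Fun(\N\cof\c,\M_\infty)$ is the restriction of $\bar g$ along $\N\cof\c\to\N\c$, which by the construction of $\bar g$ is equivalent to the composite $\N\cof\c\to\N\cof\c[w\cof^{-1}]\xrightarrow{f}\M_\infty$, i.e.\ to the image of $F$. Hence $F$ and $\wh F\mid_{\cof\c}$ have equivalent images under the comparison equivalence for $D=\cof\c$, so they are equivalent in $\N(\Fun(\cof\c,\M))[W^{-1}]$ and therefore connected by a zig-zag of natural weak equivalences; this is \eqref{zwei}. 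For uniqueness, suppose $\wh F'\colon\c\to\M$ also satisfies \eqref{eins} and \eqref{zwei}. By \eqref{zwei}, $\wh F\mid_{\cof\c}$ and $\wh F'\mid_{\cof\c}$ are both connected to $F$ by zig-zags of natural weak equivalences; since all of these functors send $w\cof\c$ to weak equivalences, they induce the same functor $\N\cof\c[w\cof^{-1}]\to\M_\infty$ up to equivalence. By \eqref{eins}, the images of $\wh F$ and $\wh F'$ in $\Fun(\N\c,\M_\infty)$ invert $w\c$ and hence factor through $\Fun(\N\c[w^{-1}],\M_\infty)$; since restriction along the equivalence $\N\cof\c[w\cof^{-1}]\to\N\c[w^{-1}]$ of \cref{main theorem} is an equivalence, the images of $\wh F$ and $\wh F'$ in $\Fun(\N\c,\M_\infty)$ are equivalent. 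Applying the comparison equivalence for $D=\c$ once more, $\wh F$ and $\wh F'$ are equivalent in $\N(\Fun(\c,\M))[W^{-1}]$, hence connected by a zig-zag of natural weak equivalences.

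The conceptual content is carried entirely by \cref{main theorem}; the remaining difficulty is purely technical, namely to establish the comparison equivalence $\N(\Fun(D,\M))[W^{-1}]\simeq\Fun(\N D,\M_\infty)$ in the stated generality for $\M$, and to keep careful track of the dictionary between relative functors and functors defined on localizations.
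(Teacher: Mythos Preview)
Your proposal is correct and follows essentially the same strategy as the paper: both reduce the statement to \cref{main theorem} via the comparison equivalence $\N\Fun(D,\M)[W^{-1}]\simeq\Fun(\N D,\M_\infty)$, justified for combinatorial $\M$ by Lurie and extended by transport along Quillen equivalences (this is where the functorial replacements are used). The paper packages the argument as a commutative square whose vertical maps are the comparison equivalences and whose bottom map is the equivalence from \cref{main theorem}, then passes to homotopy categories; you instead unpack this into an explicit construction of $\wh F$ and separate verifications of \eqref{eins}, \eqref{zwei}, and uniqueness, but the content is the same. One small quibble: you invoke the projective model structure on $\Fun(D,\M)$, which need not exist for an $\M$ that is merely Quillen equivalent to a combinatorial model category; but you do not actually use it, only the localization at levelwise weak equivalences, so this does not affect the argument.
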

\begin{proof}
We denote the $\infty$-category 
$\N\M[w^{-1}]$  associated to the model category $\M$ by $\M_\infty$. We claim that for any ordinary category $\A$ the canonical map
\[ \N\Fun(\A,\M)[\ell^{-1}] \to \Fun(\N\A, \M_\infty) \]
is an equivalence of $\infty$-categories, where $\ell$ is the class of levelwise weak equivalences. If $\M$ is a simplicial, combinatorial model category this is a special case of 
\cite[Proposition 4.2.4.4]{LurieHTT} using that for a simplicial model category $\M$, the $\infty$-category $\M_\infty$ is equivalent to  the homotopy coherent nerve of the simplicial subcategory of $\M$ on the fibrant and cofibrant objects, see \cite[Theorem 1.3.4.20]{LurieHA}.
From the existence of functorial (co)fibrant replacements and \cite[Proposition 1.3.13]{Hovey} it follows that a Quillen equivalence $\M \simeq \M'$ induces a Quillen equivalence $\Fun(\A,\M) \simeq \Fun(\A,\M')$. Thus the domain of the map in question is invariant under Quillen equivalences in $\M$.
The same is true for the codomain, thus the statement that this map is an equivalence is invariant under Quillen equivalences in $\M$.
Hence it is also true for all model categories $\M$ with functorial (co)fibrant replacements that are Quillen equivalent to a combinatorial, simplicial model category. 
Since every combinatorial model category is equivalent to a combinatorial, simplicial model category by a result of Dugger \cite[Corollary 1.2]{MR1870516}, the claim holds in our generality. If $\A$ is a relative category it also follows that the induced functor
\[ \N\Fun^w(\A,\M)[\ell^{-1}] \to \Fun^w(\N\A, \M_\infty) \]
is an equivalence, where the superscript $w$ refers to functors that send weak equivalences in $\A$ to weak equivalences, respectively equivalences in the target.
Thus in the canonical commuting square
\begin{tikzeq*}
  \matrix[diagram]
  {
    |(m)| \N\Fun^w(\c, \M)[\ell^{-1}] & |(mc)| \N\Fun^w(\cof\c,\M)[\ell^{-1}] \\
    |(i)| \Fun^w(\N\c, \M_\infty)  & |(ic)| \Fun^w(\N\cof\c,\M_\infty)   \\
  };

  \draw[->] (m)  to (i);
  \draw[->] (mc) to (ic);
  \draw[->] (m)  to (mc);
  \draw[->] (i)  to (ic);
\end{tikzeq*}
the vertical maps are equivalences of $\infty$-categories. By  \cref{main theorem} the lower map is also an equivalence, therefore also the upper one is. 
Passing to homotopy categories we obtain the desired result, using that isomorphisms in homotopy categories of functor categories are represented by zig-zags of natural weak equivalences.
\end{proof}

\section*{Applications}

\subsection*{Groupoids}
We denote by $\Gpd$ the $1$-category of small groupoids and by $\Gpd_2$ the $\infty$-category associated to the $(2,1)$-category of groupoids in which the $2$-morphisms are natural transformations. The category $\Gpd$ admits a simplicial model structure in which the equivalences are equivalences of categories and the cofibrations are functors that are injective on the set of objects. In this model structure all objects are cofibrant and fibrant, compare \cite{Casacuberta}. Furthermore if we denote by $\Gpd^\omega$ the full subcategory on groupoids with at most countable many morphisms then $\Gpd^\omega$ inherits the structure of a cofibration category.

The following lemma is a well known fact, but we had difficulties finding a clear reference for this so we state it as an extra lemma.
\begin{Lemma}\label{lemma3}
The canonical map $\N\Gpd[w^{-1}] \to \Gpd_2$ is an equivalence of $\infty$-categories.
\end{Lemma}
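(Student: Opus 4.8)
The plan is to identify both sides with the homotopy-coherent nerve of a suitable simplicial category and then invoke a recognition principle for localizations. First I would recall that $\Gpd$, equipped with the canonical model structure of \cite{Casacuberta}, is a simplicial model category in which every object is both fibrant and cofibrant. Consequently, by \cite[Theorem 1.3.4.20]{LurieHA}, the localization $\N\Gpd[w^{-1}]$ is equivalent to the homotopy-coherent nerve $\N^{\mathrm{hc}}(\Gpd^\circ)$ of the simplicial category $\Gpd^\circ$ of fibrant-cofibrant objects; since all objects qualify, $\Gpd^\circ$ is simply $\Gpd$ viewed as a simplicial category via its function complexes.

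Next I would observe that for groupoids $G$ and $H$, the simplicial mapping space $\Map(G,H)$ in this model structure is (a model for) the nerve of the groupoid $\Fun(G,H)$ of functors and natural isomorphisms: the simplicial model structure on $\Gpd$ is cotensored and tensored over $\mathrm{sSet}$ in the standard way, and the function complex between two cofibrant-fibrant objects computes the derived mapping space, which is exactly the maximal Kan complex inside $\Fun(G,H)$ — and since $G,H$ are groupoids, $\Fun(G,H)$ is already a groupoid, so its nerve is already Kan. Thus $\Gpd^\circ$ is, as a simplicial category, the one whose mapping spaces are $\N\Fun(G,H)$. Its homotopy-coherent nerve is by definition the $\infty$-category underlying the $(2,1)$-category of groupoids, functors, and natural isomorphisms, i.e.\ $\Gpd_2$.

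Finally I would check that the canonical comparison map $\N\Gpd[w^{-1}] \to \Gpd_2$ is the equivalence just produced, rather than merely some abstract equivalence. This is a matter of universal properties: both the Lurie comparison $\N\Gpd \to \N^{\mathrm{hc}}(\Gpd^\circ)$ and the nerve-of-the-$(2,1)$-category map $\N\Gpd \to \Gpd_2$ send weak equivalences to equivalences and are initial among such (for the localization) or restrict compatibly, so the induced map on $\N\Gpd[w^{-1}]$ agrees with the chain of equivalences. Alternatively one can sidestep this by noting that any functor out of $\N\Gpd[w^{-1}]$ inverting weak equivalences is determined by its restriction to $\N\Gpd$, and the two functors to $\Gpd_2$ we are comparing agree there up to coherent equivalence.

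The main obstacle I expect is the middle step: pinning down precisely that the simplicial function complex $\Map(G,H)$ in the model structure of \cite{Casacuberta} is genuinely $\N\Fun(G,H)$ and not merely weakly equivalent to it in some uncontrolled way, and that this identification is natural and compatible with composition (so that it assembles into an equivalence of simplicial categories, hence of homotopy-coherent nerves). Checking that $\Fun(G,H)$ is a Kan-complex-valued enrichment and that the cotensor $H^{\Delta^1}$ computes the path groupoid is routine but needs the explicit simplicial structure; once this is in place, the rest follows formally from Lurie's straightening/nerve machinery cited in the excerpt.
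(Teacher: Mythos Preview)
Your proposal is correct and follows exactly the same line as the paper's proof: invoke \cite[Theorem 1.3.4.20]{LurieHA} for the simplicial model category $\Gpd$ (all objects fibrant-cofibrant) to identify $\N\Gpd[w^{-1}]$ with the homotopy-coherent nerve of $\Gpd$ as a simplicial category, which is $\Gpd_2$. The paper's proof is a one-line citation of this theorem, whereas you spell out the identification of the simplicial mapping spaces with $\N\Fun(G,H)$ and worry about naturality of the comparison map---these are reasonable points to make explicit, but the underlying argument is identical.
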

\begin{proof}
This follows from the description of the $\infty$-category associated to a simplicial model category, see \cite[Theorem 1.3.4.20]{LurieHA}, as being the the homotopy coherent nerve of the simplicial category of cofibrant and fibrant objects.
\end{proof}

\begin{Cor}\label{groupoids1}
Let $\c$ be an $\infty$-category. Then the canonical map $\N \cof\Gpd \to \Grp_2 $ induces an equivalence
\begin{tikzeq*}
  \matrix[diagram]
  {
    |(2)| \Fun(\Gpd_2,\c) & |(i)| \Fun^w(\N \cof\Gpd,\c) \\
  };

  \draw[->] (2) to node[above] {$\simeq$} (i);
\end{tikzeq*}
where the superscript $w$ refers to functors that send equivalences of groupoids to equivalences in $\c$.
\end{Cor}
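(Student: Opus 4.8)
The plan is to combine \cref{main theorem} applied to the cofibration category $\Gpd^\omega$ (which has good cylinders, being a simplicial model category with nice cylinders given by cylinder groupoids) with \cref{lemma3}. First I would note that by \cref{main theorem} the inclusion $\N\cof\Gpd^\omega[w\cof^{-1}] \to \N\Gpd^\omega[w^{-1}]$ is an equivalence of $\infty$-categories, and by \cref{lemma3} (applied to $\Gpd^\omega$, which also carries a simplicial model structure with all objects cofibrant-fibrant) the target is equivalent to $\Gpd_2^\omega$, the $(2,1)$-category of countable groupoids. Taking the universal property of localizations, the left-hand side represents $\Fun^w(\N\cof\Gpd^\omega, \c)$ while the right-hand side represents $\Fun(\Gpd_2^\omega, \c)$, so the displayed map is an equivalence after replacing $\Gpd$ by $\Gpd^\omega$ throughout.

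The remaining issue is to pass from the countable subcategory $\Gpd^\omega$ to all of $\Gpd$. Here I would use that every small groupoid is a filtered colimit of its countable (indeed finitely generated) full subgroupoids, and that this exhibits $\Gpd_2$ as the $\mathrm{Ind}$-completion — or more precisely the filtered-colimit completion — of $\Gpd_2^\omega$; dually, $\cof\Gpd$ is built from $\cof\Gpd^\omega$ by freely adjoining filtered colimits along object-injective functors, which are preserved by the nerve. Concretely, restriction gives a commuting square relating $\Fun(\Gpd_2,\c)$, $\Fun(\Gpd_2^\omega,\c)$, $\Fun^w(\N\cof\Gpd,\c)$ and $\Fun^w(\N\cof\Gpd^\omega,\c)$; I would argue that both vertical restriction functors are equivalences because $\c$ is an arbitrary $\infty$-category with no cocompleteness hypotheses, so there is no colimit-preservation constraint and a functor on the big category is freely determined by its restriction to the countable subcategory together with its behaviour on the object-injective filtered systems — and the latter data is already encoded in $\Fun^w$, since filtered colimits of groupoids along injective-on-objects functors are weak equivalences onto the colimit only in the appropriate localized sense. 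Then the bottom equivalence from the previous paragraph forces the top one.

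Alternatively, and perhaps more cleanly, I would avoid the size reduction altogether by observing that the only place countability entered was to invoke \cref{main theorem} for a \emph{small} cofibration category; but $\cof\Gpd$ is a union of the small cofibration categories $\cof\Gpd^\kappa$ over regular cardinals $\kappa$, the equivalence of \cref{main theorem} is natural in this filtration, and localization as well as the formation of functor $\infty$-categories out of a fixed target commute with the relevant (co)filtered (co)limits of $\infty$-categories; passing to the colimit over $\kappa$ then yields $\N\cof\Gpd[w\cof^{-1}] \simeq \N\Gpd[w^{-1}] \simeq \Gpd_2$, and the statement follows by the universal property of localization exactly as above. The main obstacle I anticipate is making the second paragraph's colimit-of-categories argument precise — specifically checking that $\Gpd_2 \simeq \colim_\kappa \Gpd_2^\kappa$ as $\infty$-categories and that $\Fun(-,\c)$ sends this to a limit — rather than any subtlety in the core equivalence, which is essentially immediate from \cref{main theorem} and \cref{lemma3}.
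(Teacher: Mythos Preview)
Your core idea---combine \cref{main theorem} with \cref{lemma3} and invoke the universal property of localization---is exactly the paper's argument, and in the paper it is essentially one line. The difference is that the paper applies \cref{main theorem} directly to $\Gpd$: the theorem carries no smallness hypothesis, and $\Gpd$ is a cofibration category with good cylinders coming from its simplicial model structure. So the entire detour through $\Gpd^\omega$ and the subsequent passage back to the large category is unnecessary; the smallness hypothesis you are worried about appears only in \cref{prop}, not in \cref{main theorem}.

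More seriously, your second paragraph contains a genuine gap. The restriction functor $\Fun(\Gpd_2,\c) \to \Fun(\Gpd_2^\omega,\c)$ is \emph{not} an equivalence for an arbitrary $\infty$-category $\c$: without cocompleteness of $\c$ there is no mechanism to extend a functor on countable groupoids to all groupoids, and the absence of a colimit-preservation constraint makes the situation worse rather than better (take $\c = \Gpd_2^\omega$ and the identity functor, which admits no extension). Your alternative in the third paragraph---filtering $\Gpd$ by $\Gpd^\kappa$ over regular cardinals and passing the equivalence of \cref{main theorem} to the colimit---is a legitimate way to handle size if one insists on applying the theorem only to small cofibration categories, but the paper simply does not take this route.
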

\begin{proof}
Since the canonical map  $\N\Gpd[w^{-1}] \to \Gpd_2$ is an equivalence by \cref{lemma3}, this is a direct application of \cref{main theorem}.
\end{proof}

The following corollary of Proposition \ref{prop} implies that in the approach to assembly maps discussed in \cite[section 2]{DavisLueck} one can directly restrict to functors from groupoids to spectra that are only defined for maps of groupoids that are injective on objects. This resolves the issues illustrated in \cite[Remark 2.3]{DavisLueck}.
\begin{Cor}\label{groupoidscor2}
Let $\Sp$ be any of the categories of spectra. Then every functor $F: \cof\Gpd \to \Sp$ which sends equivalences of groupoids to weak equivalences in $\Sp$ extends uniquely (in the sense of Proposition \ref{prop}) to a functor 
$\widehat{F}: \Gpd \to \Sp$ which also sends weak equivalences of groupoids to weak equivalences of spectra.
\end{Cor}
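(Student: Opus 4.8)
The plan is to deduce \cref{groupoidscor2} as an essentially immediate consequence of \cref{prop} once we have checked that the hypotheses apply. First I would verify that $\Gpd^\omega$, the category of groupoids with at most countably many morphisms, is a small cofibration category with good cylinders: smallness is clear, the cofibration category structure is the one recalled just before \cref{lemma3}, and the good cylinders axiom should be checked by exhibiting explicit cylinder objects --- for a groupoid $G$ one takes $G \times I$ where $I$ is the groupoid with two objects and a unique isomorphism between them, and one checks that the fold map is an acyclic cofibration in the relevant sense. Second, I would note that each of the usual categories of spectra $\Sp$ is a model category Quillen equivalent to a combinatorial (indeed simplicial, combinatorial) model category and has functorial fibrant and cofibrant replacements, so it is an admissible target $\M$ for \cref{prop}.

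With these two points in place, the corollary is just \cref{prop} applied with $\c = \Gpd^\omega$ and $\M = \Sp$: a functor $F \colon \cof\Gpd^\omega \to \Sp$ sending acyclic cofibrations to weak equivalences extends to $\wh F \colon \Gpd^\omega \to \Sp$ sending all weak equivalences to weak equivalences, and this extension is unique up to a zig-zag of natural weak equivalences, which is precisely the sense of uniqueness asserted. The only subtlety is the discrepancy between $\Gpd$ and $\Gpd^\omega$. To pass from the countable version to all groupoids I would use that every groupoid is a filtered colimit of its sub-groupoids on countably many morphisms and that $\cof\Gpd \to \cof\Gpd^\omega$-indexed data can be left Kan extended along the inclusion; alternatively, and more cleanly, one observes that $\N\Gpd[w^{-1}] \to \Gpd_2$ is an equivalence by \cref{lemma3} and that the same holds after restricting to the subcategory of cofibrations by \cref{main theorem} (this is exactly \cref{groupoids1}), so the extension/uniqueness statement transported through these equivalences gives the result for $\Gpd$ directly.

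The step I expect to be the main obstacle is the cardinality bookkeeping needed to reconcile $\Gpd$ with the small cofibration category $\Gpd^\omega$ to which \cref{prop} literally applies: one must ensure that a functor $F$ on all injective-on-objects maps of arbitrary groupoids, when restricted to $\Gpd^\omega$, still satisfies the hypothesis of \cref{prop}, and that the resulting $\wh F$ on $\Gpd^\omega$ can be promoted back to a functor on all of $\Gpd$ that still sends weak equivalences to weak equivalences --- this requires a compatibility of the extension with the filtered colimit presentation of large groupoids, or else a direct appeal to \cref{main theorem} at the level of large (but locally small) cofibration categories, checking that the arguments in the proof of \cref{prop} go through without the smallness hypothesis. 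Everything else --- the good cylinders check for groupoids, the admissibility of $\Sp$ as a target, and the translation of "unique up to zig-zag of natural weak equivalences" into the statement of the corollary --- is routine given the results already established.
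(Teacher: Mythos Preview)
Your approach is essentially the paper's: the paper gives no proof at all and simply presents \cref{groupoidscor2} as an immediate corollary of \cref{prop}, applied with $\c=\Gpd$ and $\M=\Sp$. So the overall strategy is the same.

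Two remarks where your write-up diverges from the paper. First, the paper does not route through $\Gpd^\omega$ here; it applies \cref{prop} directly to $\Gpd$, tacitly ignoring the smallness hypothesis (which is only there for set-theoretic hygiene in forming functor categories and plays no role in the actual argument---the proof of \cref{prop} only uses \cref{main theorem}, which has no smallness assumption). Your filtered-colimit patch is unnecessary and would in fact be delicate to make precise; your alternative suggestion, to observe that the proof of \cref{prop} goes through without smallness, is exactly what the paper is implicitly doing. Second, your description of the good-cylinders check is slightly off: the condition is not about the fold map being an acyclic cofibration, but that for every cofibration $X\rightarrowtail Y$ the pushout-corner map $IX\sqcup_{X\sqcup X}(Y\sqcup Y)\to IY$ is a cofibration. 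You need not verify this by hand: the paper notes (just after \cref{good-cylinders}) that any cofibration category arising from a model category enriched over a monoidal model category has good cylinders, and $\Gpd$ is simplicial, so this is automatic.
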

\begin{Rmk}
The statements of \cref{groupoids1} and \cref{groupoidscor2} remain  true if we replace $\Gpd$ by $\Gpd^\omega$. Furthermore \cref{groupoidscor2} does not depend on the exact choice of model category of spectra as long as it is Quillen equivalent to a combinatorial model category. 
\end{Rmk}

Next we want to demonstrate how to apply these results by \emph{functorially} constructing $C^*$-algebras and topological $K$-theory spectra associated to groupoids. This discussion is similar to the one given in \cite[section 3]{Joachim2} but we use our main theorem to obtain full functoriality instead of an explicit construction.
\begin{Def}\label{groupoid algebra}
Let $\G$ be a groupoid.
We let $\C\G$ be the $\C$-linearization of the set of morphisms of $\G$. This is a $\C$-algebra by linearization of the multiplication on morphisms given by
\[ f\cdot g = \begin{cases} f\circ g & \text{ if $f$ and $g$ are composable} \\ 0 & \text{ else.} \end{cases} \] 
We remark that $\C\G$ is unital if and only if the set of objects of $\G$ is finite.
Then we complete $\C\G$ in a universal way, like for the full group $C^*$-algebra, to obtain a $C^*$-algebra $C^*\G$. More precisely, the norm is given by the supremum over all norms of representations of $\C\G$ on a separable Hilbert space.
This is isomorphic to the $C^*$-algebra associated to the maximal groupoid $C^*$-category of \cite[Definition 3.16]{DellAmbrogio2} using the construction $\mathcal{C} \mapsto A_\mathcal{C}$ of \cite[section 3]{Joachim2}. 
\end{Def}
The association $\G \mapsto \C^*\G$ is functorial for cofibrations of groupoids but not for general morphisms since it can happen that morphisms are not composable in a groupoid, but become composable after applying a functor, compare the remark \cite[page 214]{DavisLueck}. We observe that the $C^*$-algebra $\C^*\G$ is separable provided $\G \in \Gpd^\omega$. 

\begin{Lemma}\label{morita invariance}
Let $F\colon \G_1 \to \G_2$ be an acyclic cofibration of groupoids. Then the induced morphism
\[ C^*F \colon C^*\G_1 \to C^*\G_2 \]
is a $\KK$-equivalence.
\end{Lemma}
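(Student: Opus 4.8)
The plan is to reduce the statement to the well-understood behaviour of $C^*$-algebras under inclusions of unital subalgebras and under stabilization, exploiting the structure of an acyclic cofibration of groupoids. Recall that a cofibration of groupoids is a functor injective on objects, and a weak equivalence is an equivalence of categories; so an acyclic cofibration $F\colon\G_1\to\G_2$ is a fully faithful functor, injective on objects, which is essentially surjective. First I would treat the case where $\G_2$ has only finitely many objects, or more generally is equivalent to a groupoid with finitely many objects in a controlled way; here $C^*\G_1$ and $C^*\G_2$ are (Morita-equivalent to) finite direct sums of reduced group $C^*$-algebras of the automorphism groups, and fully faithfulness of $F$ identifies the relevant automorphism groups. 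The induced map $C^*F$ should then be recognized as a ``corner embedding'' $A\hookrightarrow pBp$ composed with a Morita equivalence $pBp\sim B$, each of which is a $\KK$-equivalence.

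In more detail, the key steps are as follows. Step one: decompose both groupoids into connected components; since $F$ is essentially surjective and fully faithful it induces a bijection on isomorphism classes of objects, hence on connected components, and $C^*$ takes disjoint unions of groupoids to (possibly infinite) direct sums, so it suffices to treat a connected $\G_2$. Step two: for a connected groupoid $\G$ with object set $S$ and vertex group $G$ (the automorphism group of any chosen object), identify $C^*\G$ with $\mathcal{K}(\ell^2 S)\otimes C^*_{\max}G$, or if $S$ is a single point simply with $C^*_{\max}G$; this is the groupoid-$C^*$-algebra analogue of the statement that the groupoid algebra of a connected groupoid is a matrix algebra over the group algebra of its vertex group, and it is exactly the $C^*$-categorical Morita statement alluded to after \cref{groupoid algebra}. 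Step three: choose a section of objects and observe that $F$, being injective on objects and fully faithful, sends a chosen basepoint $x_1\in S_1$ to $x_2=F(x_1)\in S_2$ and restricts to an isomorphism $\mathrm{Aut}_{\G_1}(x_1)\xrightarrow{\ \cong\ }\mathrm{Aut}_{\G_2}(x_2)$; under the identifications of Step two this exhibits $C^*F$ as $\mathcal{K}(\ell^2 S_1)\otimes C^*_{\max}G \to \mathcal{K}(\ell^2 S_2)\otimes C^*_{\max}G$ induced by the inclusion $S_1\hookrightarrow S_2$ on the first factor, which is a tensor product of the identity on $C^*_{\max}G$ with a corner inclusion of compact operators. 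Step four: conclude, since the inclusion $\mathcal{K}(\ell^2 S_1)\hookrightarrow \mathcal{K}(\ell^2 S_2)$ induced by an inclusion of index sets is a $\KK$-equivalence (both sides being $\KK$-equivalent to $\C$ via Morita invariance of $\KK$), and $\KK$-equivalences are stable under tensoring with a fixed separable $C^*$-algebra and under assembling direct sums.

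The main obstacle I anticipate is making Step two precise at the level of $C^*$-algebras rather than plain $*$-algebras: the algebraic groupoid algebra $\C\G$ of a connected groupoid is visibly a (possibly infinite, non-unital) matrix algebra over $\C G$, but one must check that the universal $C^*$-completion $C^*\G$ is compatible with this identification — i.e.\ that completing commutes with the matrix/stabilization construction and with the infinite direct sum over components — and that the norm on $C^*\G$ is the one making $C^*\G\cong\mathcal{K}(\ell^2S)\otimes C^*_{\max}G$. This is where I would lean on the cited identification with the maximal groupoid $C^*$-category of \cite[Definition 3.16]{DellAmbrogio2} and the construction $\mathcal{C}\mapsto A_\mathcal{C}$ of \cite[section 3]{Joachim2}, since Morita invariance of that construction at the level of $C^*$-categories packages exactly the needed compatibility; the separability remark preceding the lemma ensures we stay within the class of $C^*$-algebras where $\KK$ behaves well. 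A secondary, more bookkeeping-type obstacle is the case of infinitely many objects in a component (so that $\ell^2 S$ is infinite-dimensional): here one should use that $\mathcal{K}(\ell^2 S)$ is still $\KK$-equivalent to $\C$ and that the corner inclusion corresponding to $S_1\hookrightarrow S_2$ remains full, so the $\KK$-equivalence persists.
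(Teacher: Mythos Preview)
Your proposal is correct and follows the same broad outline as the paper---reduce to connected components, then invoke Morita/corner theory---but the execution differs in an instructive way. After reducing to connected $\G_2$, the paper does \emph{not} attempt the explicit identification $C^*\G \cong \mathcal{K}(\ell^2 S)\otimes C^*_{\max}G$ that you set up in Step two. Instead it makes a further reduction: using the commuting square
\begin{tikzeq*}
  \matrix[diagram]
  {
    |(1c)| C^*\G_1 & |(2c)| C^*\G_2 \\
    |(1)|  C^*G_1  & |(2)|  C^*G_2  \\
  };
  \draw[->] (1c) to node[above] {$C^*F$} (2c);
  \draw[->] (1)  to node[below] {$\cong$} (2);
  \draw[->] (1) to (1c);
  \draw[->] (2) to (2c);
\end{tikzeq*}
(with $G_i$ the vertex group), it reduces to showing that the single inclusion $C^*G \hookrightarrow C^*\G$ is a $\KK$-equivalence. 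This is then handled by the abstract full-corner argument: $p=\id_x$ is a full projection in $C^*\G$ with corner $p\,C^*\G\,p \cong C^*G$, and the imprimitivity bimodule $pA$ witnesses $[i_p]\in\KK(pAp,A)$ as invertible.

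The trade-off is this: your route gives a concrete structural picture of $C^*F$ (as a corner inclusion of compacts tensored with the identity), but at the cost of the completion-compatibility check you flag as your main obstacle. The paper's route sidesteps that obstacle entirely---it never needs to know what $C^*\G$ looks like globally, only that $\id_x$ is a full projection and that its corner is $C^*G$, both of which are immediate from the definition. So the paper's argument is shorter and more robust, while yours would yield more explicit information if the identification in Step two were carried out in full.
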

\begin{proof}
The $C^*$-algebra associated to a groupoid is the product of the $C^*$-algebras associated to each connected component. Thus we may assume that $\G_1$ (and thus $\G_2$) is connected.
Let $x \in \G$ be an object. We let $G_1 = \End(x)$ and $G_2 = \End(Fx)$ be the endomorphism groups and notice the fact that $F$ is an equivalence implies that $F$ induces an isomorphism $G_1 \cong G_2$.
Then we consider the diagram
\begin{tikzeq*}
  \matrix[diagram]
  {
    |(1c)| C^*\G_1 & |(2c)| C^*\G_2 \\
    |(1)|  C^*G_1  & |(2)|  C^*G_2  \\
  };

  \draw[->] (1c) to node[above] {$C^*F$} (2c);
  \draw[->] (1)  to node[below] {$\cong$} (2);

  \draw[->] (1) to (1c);
  \draw[->] (2) to (2c);
\end{tikzeq*}
in which the lower horizontal arrow is an isomorphism. Thus to show the lemma it suffices to prove the lemma in the special case where $F$ is the inclusion of the endomorphisms of an object $x$ of a connected groupoid $\G$.

This can be done using in the abstract setting of corner algebras. For this suppose $A$ is a $C^*$-algebra and $p \in A$ is a projection. It is called full if $ApA$ is dense in $A$. The algebra $pAp$ is called the corner algebra of $p$ in $A$. It is called a full corner if $p$ is a full projection. We write $i_p$ for the inclusion $pAp \subset A$. Given a projection $p$ the module $pA$ is an imprimitivity $pAp-\overline{ApA}$ bimodule, see e.g. \cite[Example 3.6]{Raeburn}. Thus if $p$ is full, then $pA$ gives rise to an invertible element $[pA,i_p,0]= \F(p) \in \KK(pAp,A)$.  In this KK-group we have an equality
\begin{align*} \F(p) & = [\,pA,i_p,0] + [(1-p)A,0,0] \\ & = [\,pA\oplus(1-p)A,i_p,0] \\ &= [A,i_p,0] = [i_p] ,
\end{align*}
in other words, the inclusion $pAp \to A$ of a corner algebra associated to a full projection is a KK-equivalence. 

To come back to our situation let us suppose $\G$ is a groupoid, $x \in \G$ is an object and let us denote its endomorphism group by $G = \End(x)$. We can consider the element $p = \id_x \in C^*\G$ which is clearly a projection. Its corner algebra is given by
\[ p\cdot C^*\G \cdot p \cong C^*G.\]
If $\G$ is connected it follows that every morphism in $\G$ may be factored through $\id_x$ and thus $p$ is full if $\G$ is connected. Hence it follows that the inclusion $C^*G \to C^*\G$ is an embedding of a full corner algebra. Thus by the general theory this inclusion is a KK-equivalence which proves the lemma.
\end{proof}

Let us denote by $\KK_\infty$ the $\infty$-category given by the localization of the category $\cc$ of separable $C^*$-algebras at the KK-equivalences, see e.g. \cite[Definition 3.2]{LandNikolaus}. In formulas we have $\KK_\infty := \N\cc[w^{-1}]$ where $w$ denotes the class of KK-equivalences. The homotopy category of $\KK_\infty$ is Kasparov's KK-category of $C^*$-algebras.

\begin{Cor}\label{groupoids2}
There exists a functor
\[ \Gpd_2^\omega \to \KK_\infty \]
which on objects sends a groupoid $\G$ to the full groupoid $C^*$-algebra $C^*\G$.
\end{Cor}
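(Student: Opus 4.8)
The plan is to realize $C^*$ as a relative functor that is \emph{a priori} defined only on the subcategory of cofibrations, and then extend it along the equivalence of \cref{groupoids1}. Concretely, I would first check that $\G \mapsto C^*\G$ underlies an honest functor $C^* \colon \cof\Gpd^\omega \to \cc$. For this one uses that a cofibration $F \colon \G_1 \to \G_2$, being injective on objects, \emph{reflects} composability of morphisms: if $Ff$ and $Fg$ are composable in $\G_2$, then $f$ and $g$ already have matching source and target, hence are composable in $\G_1$. Consequently the $\C$-linear extension $\C F \colon \C\G_1 \to \C\G_2$ is multiplicative, hence a $*$-homomorphism after completion, and strict functoriality $C^*(GF) = C^*(G)C^*(F)$ is immediate. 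Since $C^*\G$ is separable whenever $\G \in \Gpd^\omega$, this indeed takes values in $\cc$. Composing with the localization $\cc \to \KK_\infty$ and invoking \cref{morita invariance} — which says acyclic cofibrations of groupoids are sent to $\KK$-equivalences — we obtain an object of $\Fun^w(\N\cof\Gpd^\omega, \KK_\infty)$.

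Next I would apply \cref{groupoids1} with $\c = \KK_\infty$, in the variant for $\Gpd^\omega$ noted in the Remark; this is legitimate because $\Gpd^\omega$ is a small cofibration category with good cylinders (the interval groupoid has only finitely many morphisms, so cylinders of countable groupoids remain countable). The corollary supplies an equivalence $\Fun(\Gpd_2^\omega, \KK_\infty) \xto{\simeq} \Fun^w(\N\cof\Gpd^\omega, \KK_\infty)$, induced by restriction along the canonical map $\N\cof\Gpd^\omega \to \Gpd_2^\omega$. A preimage of the functor constructed in the previous paragraph is then a functor $\wh{C^*} \colon \Gpd_2^\omega \to \KK_\infty$. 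Since the comparison map $\N\cof\Gpd^\omega \to \Gpd_2^\omega$ is the identity on objects, the restriction of $\wh{C^*}$ agrees up to equivalence with $C^*$ followed by $\cc \to \KK_\infty$, and therefore $\wh{C^*}(\G) \simeq C^*\G$ in $\KK_\infty$ for every groupoid $\G$, which is the assertion.

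The only step requiring genuine (if elementary) verification is the first one, namely that $C^*$ is strictly functorial on $\cof\Gpd^\omega$; the crucial point there is exactly the interplay between injectivity on objects and composability that fails for arbitrary functors of groupoids. All of the substantial content has already been absorbed into \cref{morita invariance} and, via \cref{groupoids1}, into \cref{main theorem}, so I do not anticipate any further obstacle.
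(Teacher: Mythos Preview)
Your proposal is correct and follows essentially the same approach as the paper: construct the honest functor $\cof\Gpd^\omega \to \cc$ of \cref{groupoid algebra}, use \cref{morita invariance} to see it defines an object of $\Fun^w(\N\cof\Gpd^\omega,\KK_\infty)$, and then transport through the equivalence of \cref{groupoids1} in its $\Gpd^\omega$ variant. You spell out a bit more explicitly why injectivity on objects yields strict functoriality and why the extension has the right values on objects, but the argument is the same.
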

\begin{Rmk}
We notice that the $(2,1)$-category $\Orb^\omega$ consisting of (countable) groups, group homomorphisms, and conjugations is the full subcategory of the $(2,1)$-category of (countable) groupoids on connected groupoids and hence along this inclusion we also obtain a functor
\[ \Orb^\omega \to \KK_\infty \]
which on objects sends a group to its full group $C^*$-algebra. This will be used in \cite{LandNikolaus} to compare the $L$-theoretic Farrell-Jones conjecture and the Baum-Connes conjecture. 
\end{Rmk}
\begin{proof}[Proof of \cref{groupoids2}]
By \cref{groupoids1} and the remark after \cref{groupoidscor2}, we have an equivalence
\[ \Fun^w(\N \cof\Gpd^\omega,\KK_\infty) \simeq \Fun(\Gpd_2^\omega,\KK_\infty) \]
and thus it suffices to construct a functor
\[ \cof\Gpd^\omega \to \cc \]
which has the \emph{property} that it sends equivalences of groupoids to $\KK$-equivalences.
We have established in \cref{morita invariance} that the functor of \cref{groupoid algebra} satisfies this property.
\end{proof}

\begin{Rmk}
In \cite[Proposition 3.7]{LandNikolaus} it is shown that the topological $K$-theory functor 
\[ K\colon \N\cc \to \Sp \]
factors over $\KK_\infty$, in fact becomes corepresentable there. It thus follows from \cref{groupoids2} that there is a functor sending a groupoid to the topological $K$-theory spectrum of its $C^*$-algebra.
\end{Rmk}

\section*{The proof of \cref{main theorem}}

In this section we will prove \cref{main theorem}.
Recall that we consider a cofibration category $(\c,w\c,\cof\c)$ and
aim to compare the $\infty$-categories associated to
the relative categories $(\c,w\c)$ and $(\cof\c,w\cof\c)$.
As our model of the homotopy theory of $(\infty,1)$-categories we will use
\emph{complete Segal spaces} of Rezk, see \cite{Rezk}.
This homotopy theory is modelled by the Rezk model structure on the category of
bisimplicial sets in which fibrant objects are the complete Segal spaces.
The model structure is constructed as a Bousfield localization of
the Reedy model structure and hence every levelwise weak equivalence of
bisimplicial sets is a Rezk equivalence, i.e.\ an equivalence of $\infty$-categories.

The $\infty$-category associated to a relative category $(\D,w\D)$ is modelled by
the \emph{classification diagram} $\Ncd\D$ of Rezk which is given by
\[ (\Ncd\D)_k \mapsto \N w(\D^{[k]}),\]
where the weak equivalences in $\D^{[k]}$ are levelwise weak equivalences, 
compare \cite[section 3.3]{Rezk} and \cite[Theorem 3.8]{Mazel-Gee}. See also the MathOverflow post \cite{Cisinksi2}.
The classification diagram is not fibrant in the Rezk model structure,
but it is levelwise equivalent to a fibrant object if $\D$ is a cofibration category.

Recall that we stated \cref{main theorem} under the following assumption on the cofibration category $\c$.
\begin{Def}\label{good-cylinders}
  A cofibration category $\c$ has \emph{good cylinders} if
  it has a cylinder functor $I$ such that for every cofibration $X \cto Y$
  the induced morphism $I X \sqcup_{X \sqcup X} (Y \sqcup Y) \to I Y$
  is a cofibration.
\end{Def}

For example any cofibration category arising from a monoidal model category
(or a model category enriched over a monoidal model category)
has good cylinders, since
they are given by tensoring with a chosen interval object.

\begin{Thm}\label{cC-C}
  If $\c$ has good cylinders, then
  the inclusion $\cof\c \to \c$ induces a levelwise weak equivalence of
  the classification diagrams $\Ncd \cof\c \to \Ncd\c$.
\end{Thm}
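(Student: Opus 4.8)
We must show that for each $k \geq 0$ the inclusion $\cof\c \to \c$ induces a weak equivalence $\N w\bigl((\cof\c)^{[k]}\bigr) \to \N w\bigl(\c^{[k]}\bigr)$ of nerves of the respective categories of weak equivalences. Here the left-hand side is the nerve of the category whose objects are strings of $k$ composable cofibrations in $\c$ and whose morphisms are levelwise weak equivalences (which are automatically objectwise, but need not themselves be cofibrations). The strategy is to produce a functor in the other direction — a "cofibrant replacement" for diagrams $X\colon [k] \to \c$ — together with a natural weak equivalence to the identity, all of which must be compatible with levelwise weak equivalences so as to descend to the nerves of the subcategories of weak equivalences.

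\textbf{Key steps.} First, I would recall (or prove) the standard factorization and gluing lemmas available in a cofibration category: every map factors as a cofibration followed by a weak equivalence, weak equivalences are closed under pushout along cofibrations, and pushouts of cofibrations along cofibrations exist and are cofibrations. Second, using these, construct for a string $X_0 \to X_1 \to \cdots \to X_k$ a functorial replacement by a string of cofibrations $Q X_0 \cto Q X_1 \cto \cdots \cto Q X_k$ equipped with a weak equivalence $QX \we X$; the naive inductive construction (cofibrantly replace $X_0$, then factor $QX_{i} \sqcup_{X_i} X_{i+1} \to X_{i+1}$) is not strictly functorial, so I expect one must instead use a reduced mapping-path / "Reedy cofibrant replacement" construction, or appeal to the fact that the Reedy structure on $\c^{[k]}$ is itself a cofibration category and exhibit $Q$ as an honest functor valued in Reedy-cofibrant diagrams. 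Third, observe that $Q$ preserves weak equivalences and that the counit $QX \we X$ is natural in levelwise weak equivalences of $X$, so $Q$ and the counit restrict to a functor $w(\c^{[k]}) \to w((\cof\c)^{[k]})$ and a natural transformation to the inclusion-composed-with-$Q$, respectively; similarly $Q$ restricted to $(\cof\c)^{[k]}$ comes with a natural weak equivalence to the identity. Passing to nerves, these natural weak equivalences become homotopies (natural transformations through weak equivalences yield homotopies after applying $\N$, since $w$ is closed under composition), so the two composites $\N w(\cof\c)^{[k]} \to \N w\c^{[k]} \to \N w(\cof\c)^{[k]}$ and its reverse are homotopic to the respective identities. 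This gives the levelwise weak equivalence, hence by Bousfield localization a Rezk equivalence.

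\textbf{The main obstacle.} The crux is the genuine functoriality of the cofibrant replacement $Q$ on the diagram category $\c^{[k]}$ — an ordinary (not homotopy-coherent) functor $\c^{[k]} \to (\cof\c)^{[k]}$ with a strictly natural weak equivalence to the identity. This is precisely where the good-cylinders hypothesis enters: the pushout-product condition in \cref{good-cylinders} is what guarantees that the evident inductive factorization, or the associated interval-based cofibrant replacement of a diagram, actually lands in cofibrations at each stage and assembles into a string of cofibrations — without it, the maps $QX_i \cto QX_{i+1}$ need not be cofibrations. I would isolate this as a lemma: \emph{a cofibration category with good cylinders admits a functorial cofibrant replacement of $[k]$-indexed diagrams by Reedy-cofibrant diagrams, together with a natural levelwise weak equivalence to the identity.} Once that lemma is in hand, the rest of the argument is the formal nerve-of-an-equivalence-of-relative-categories manipulation sketched above.
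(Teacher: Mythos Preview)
There is a genuine gap, and it stems from a misidentification of the source category. In the relative category $(\cof\c, w\cof\c)$ the only morphisms are cofibrations and the weak equivalences are the acyclic cofibrations; hence $w\bigl((\cof\c)^{[k]}\bigr)$ is the category whose objects are strings of $k$ cofibrations and whose morphisms are \emph{levelwise acyclic cofibrations}, not arbitrary levelwise weak equivalences as you assert. This is not cosmetic: your homotopy-inverse strategy collapses once the target is correctly identified. For $Q$ to restrict to a functor $w(\c^{[k]}) \to w\bigl((\cof\c)^{[k]}\bigr)$ it would have to send an arbitrary levelwise weak equivalence $f\colon X \to Y$ to a levelwise acyclic cofibration $Qf$, but any Reedy replacement built from factorizations has $QX_0 = X_0$ and $(Qf)_0 = f_0$, which need not be a cofibration. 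Likewise the counit $QX \to X$, restricted to an $X$ that is already a string of cofibrations, is a mapping-cylinder projection and hence not a levelwise cofibration, so it does not furnish a natural transformation inside $w\bigl((\cof\c)^{[k]}\bigr)$ and gives no homotopy there. Good cylinders do make the Reedy factorization interact well with cofibrations, but they cannot manufacture a one-step functorial inverse of this kind.

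The paper sidesteps exactly this obstruction by a bisimplicial argument rather than a direct inverse. For a cofibration category $\D$ equipped with a ``good'' subcategory $g\D$ it builds an auxiliary bisimplicial set $W\D$, shows it is homotopically constant in both directions, and deduces that $\N w\cof\D \to \N wg\D$ is a weak equivalence. The crucial horizontal step produces not a single comparison map but a \emph{zigzag} of natural acyclic cofibrations $X \acto \tilde{X}$ and $\const\ev_m X \acto \tilde{X}$, which is precisely what is needed to stay inside the cofibrations. The theorem then follows by applying this with $\D = \c^{[k]}_\Reedy$ and $g\D$ taken first to be all of $\D$ and then to be the levelwise cofibrations (the latter is where the good-cylinder hypothesis actually enters), and finally comparing with $\c^{[k]}$ via the standard Reedy-versus-levelwise equivalence.
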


For the proof we will need a series of auxiliary definitions and lemmas.
Let us first fix some notation.
If $J$ is a category, then $\hat{J}$ denotes $J$ considered as
a relative category with all morphisms as weak equivalences.
If $J$ is any relative category, then $\c^J$ stands for
the cofibration category of all relative diagrams $J \to \c$ with
levelwise weak equivalences and cofibrations.
If $J$ is any relative direct category, then $\c^J_\Reedy$ stands for
the cofibration category of
all relative Reedy cofibrant diagrams $J \to \c$ with
levelwise weak equivalences and Reedy cofibrations.
See \cite[Theorem 9.3.8]{Radulescu-Banu}
for the construction of these cofibration categories.

\begin{Def}
  A subcategory $g\c$ of a cofibration category $\c$ is said to be \emph{good} if
  \begin{itemize}
  \item all cofibrations are in $g\c$;
  \item the morphisms of $g\c$ are
    stable under pushouts along cofibrations;
  \item $\c$ has functorial factorizations that preserve $g\c$
    in the sense that if
    \begin{tikzeq*}
      \matrix[diagram]
      {
        |(A0)| A_0 & |(B0)| B_0 \\
        |(A1)| A_1 & |(B1)| B_1 \\
      };

      \draw[->] (A0) to (B0);
      \draw[->] (A1) to (B1);

      \draw[->] (A0) to (A1);
      \draw[->] (B0) to (B1);
    \end{tikzeq*}
    is a square in $\c$ such that both vertical morphisms are in $g\c$
    and
    \begin{tikzeq*}
      \matrix[diagram]
      {
        |(A0)| A_0 & |(tB0)| \tilde{B}_0 & |(B0)| B_0 \\
        |(A1)| A_1 & |(tB1)| \tilde{B}_1 & |(B1)| B_1 \\
      };

      \draw[cof] (A0) to (tB0);
      \draw[cof] (A1) to (tB1);

      \draw[->] (tB0) to node[above] {$\we$} (B0);
      \draw[->] (tB1) to node[below] {$\we$} (B1);

      \draw[->] (A0) to (A1);
      \draw[->] (tB0) to (tB1);
      \draw[->] (B0) to (B1);
    \end{tikzeq*}
    is the resulting factorization, then the induced morphism
    $A_1 \sqcup_{A_0} \tilde{B}_0 \to \tilde{B}_1$ is also in $g\c$.
    (In particular, so is $\tilde{B}_0 \to \tilde{B}_1$
    by the second condition.)
  \end{itemize}
\end{Def}

Now suppose that $\c$ is cofibration category with a good subcategory $g\c$.
We let $W\c$ be the bisimplicial set whose $(m,n)$-bisimplices are
all diagrams in $\c$ of the form
\begin{tikzeq*}
  \matrix[diagram]
  {
    |(00)| X_{0,0} & |(01)| X_{0,1} & |(0l)| \ldots & |(0n)| X_{0,n} \\
    |(10)| X_{1,0} & |(11)| X_{1,1} & |(1l)| \ldots & |(1n)| X_{1,n} \\
    |(l0)| \vdots  & |(l1)| \vdots  &               & |(ln)| \vdots  \\
    |(m0)| X_{m,0} & |(m1)| X_{m,1} & |(ml)| \ldots & |(mn)| X_{m,n}
    \text{,} \\
  };

  \draw[cof] (00) to node[above] {$\we$} (01);
  \draw[cof] (01) to node[above] {$\we$} (0l);
  \draw[cof] (0l) to node[above] {$\we$} (0n);

  \draw[cof] (10) to node[above] {$\we$} (11);
  \draw[cof] (11) to node[above] {$\we$} (1l);
  \draw[cof] (1l) to node[above] {$\we$} (1n);

  \draw[cof] (m0) to node[below] {$\we$} (m1);
  \draw[cof] (m1) to node[below] {$\we$} (ml);
  \draw[cof] (ml) to node[below] {$\we$} (mn);

  \draw[->] (00) to node[left] {$\we$} node[below right = 0.25 cm and 0.02 cm] {\tiny$g$} (10);
  \draw[->] (10) to node[left] {$\we$} node[below right = 0.25 cm and 0.02 cm] {\tiny$g$} (l0);
  \draw[->] (l0) to node[left] {$\we$} node[below right = 0.25 cm and 0.02 cm] {\tiny$g$} (m0);

  \draw[->] (01) to node[left] {$\we$} node[below right = 0.25 cm and 0.02 cm] {\tiny$g$} (11);
  \draw[->] (11) to node[left] {$\we$} node[below right = 0.25 cm and 0.02 cm] {\tiny$g$} (l1);
  \draw[->] (l1) to node[left] {$\we$} node[below right = 0.25 cm and 0.02 cm] {\tiny$g$} (m1);

  \draw[->] (0n) to node[right] {$\we$} node[below right = 0.25 cm and 0.02 cm] {\tiny$g$} (1n);
  \draw[->] (1n) to node[right] {$\we$} node[below right = 0.25 cm and 0.02 cm] {\tiny$g$} (ln);
  \draw[->] (ln) to node[right] {$\we$} node[below right = 0.25 cm and 0.02 cm] {\tiny$g$} (mn);
\end{tikzeq*}
i.e.\ diagrams $\hat{[m]} \times \hat{[n]} \to \c$ where
all horizontal morphisms are cofibrations and
all vertical morphisms are in $g\c$.
In other words $W\c$ is the nerve of a double category with
the same objects as $\c$, whose
horizontal morphisms are acyclic cofibrations,
vertical morphisms are weak equivalences in $g\c$,
and double morphisms are just commutative squares.

\begin{Lemma}
  The bisimplicial set $W\c$ is vertically homotopically constant,
  i.e.\ every simplicial operator $[n] \to [n']$ induces
  a weak homotopy equivalence $(W\c)_{*,n'} \to (W\c)_{*,n}$.
\end{Lemma}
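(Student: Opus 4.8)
The plan is to make $(W\c)_{*,n}$ explicit and then to show that all of these simplicial sets are canonically weakly equivalent to $(W\c)_{*,0}$, compatibly enough to deduce the claim. First, unwinding the definition (equivalently, the double-category description just given), $(W\c)_{*,n}$ is the nerve of the category $\mathscr{B}_n$ whose objects are strings $X_0\acto X_1\acto\cdots\acto X_n$ of $n$ composable acyclic cofibrations in $\c$, and whose morphisms are the natural transformations of such diagrams all of whose components are weak equivalences lying in $g\c$; this is a well-defined category because composites and identities of acyclic cofibrations are again acyclic cofibrations, and a simplicial operator $\phi\colon[n]\to[n']$ acts by restriction, inducing $\phi^*\colon\mathscr{B}_{n'}\to\mathscr{B}_n$ and hence $\N\phi^*\colon(W\c)_{*,n'}\to(W\c)_{*,n}$. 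Note that $\mathscr{B}_0$ is the subcategory $w_g\c\subseteq\c$ of weak equivalences that lie in $g\c$, and $\N(w_g\c)=(W\c)_{*,0}$.

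The observation that makes everything work is that any acyclic cofibration \emph{is} a morphism of $w_g\c$: it is a weak equivalence, and it lies in $g\c$ since $g\c$ contains all cofibrations. With this in hand I would, for each $0\le j\le n$, consider the evaluation functor $\ev_j\colon\mathscr{B}_n\to w_g\c$ at the $j$-th vertex, and record: (i) the composite structure maps $X_0\acto X_j$ of a string assemble into a natural transformation $\ev_0\Rightarrow\ev_j$ of functors $\mathscr{B}_n\to w_g\c$, so that $\N\ev_0$ and $\N\ev_j$ become homotopic after taking nerves; and (ii) the constant-diagram functor $\const\colon w_g\c\to\mathscr{B}_n$, $X\mapsto(X\xrightarrow{=}\cdots\xrightarrow{=}X)$, satisfies $\ev_0\circ\const=\id$, while the maps $X_0\acto X_k$ give a natural transformation $\const\circ\ev_0\Rightarrow\id_{\mathscr{B}_n}$. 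From (ii), $\N\ev_0$ and $\N\const$ are mutually inverse homotopy equivalences, so $\N\ev_0\colon(W\c)_{*,n}\to(W\c)_{*,0}$ is a weak homotopy equivalence for every $n$.

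Finally, for an arbitrary simplicial operator $\phi\colon[n]\to[n']$ I would invoke the tautological identity $\ev_0\circ\phi^*=\ev_{\phi(0)}$ of functors $\mathscr{B}_{n'}\to w_g\c$: restricting along $\phi$ and then evaluating at vertex $0$ is the same as evaluating at vertex $\phi(0)$. Passing to nerves and combining with (i), $\N\ev_0\circ\N\phi^*=\N\ev_{\phi(0)}$ is homotopic to $\N\ev_0\colon(W\c)_{*,n'}\to(W\c)_{*,0}$, hence is a weak homotopy equivalence; since $\N\ev_0\colon(W\c)_{*,n}\to(W\c)_{*,0}$ is a weak homotopy equivalence as well, the two-out-of-three property shows that $\N\phi^*\colon(W\c)_{*,n'}\to(W\c)_{*,n}$ is a weak homotopy equivalence, which is the assertion.

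There is no real obstacle here: once $(W\c)_{*,n}$ is recognized as $\N\mathscr{B}_n$ and one spots that acyclic cofibrations are legitimate morphisms of $w_g\c$, the proof is formal and amounts to bookkeeping with simplicial operators. The single point needing care is verifying that the natural transformations in (i) and (ii) really take values in $w_g\c$ (equivalently $\mathscr{B}_n$), i.e.\ that each component is simultaneously a weak equivalence and a member of $g\c$; this is precisely why the clause ``all cofibrations are in $g\c$'' is part of the definition of a good subcategory.
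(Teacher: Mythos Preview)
Your proof is correct and follows essentially the same approach as the paper's: identify $(W\c)_{*,n}$ as the nerve of a category of strings of acyclic cofibrations with levelwise $g\c$-weak-equivalence morphisms, and exhibit an evaluation/constant adjunction-like pair as inverse homotopy equivalences using that acyclic cofibrations lie in $g\c$. The only cosmetic differences are that the paper uses $\ev_n$ (with the transformation $\id\Rightarrow\const\ev_n$) while you use $\ev_0$ (with $\const\ev_0\Rightarrow\id$), and that you spell out the reduction to the case $n'=0$ explicitly via $\ev_0\circ\phi^*=\ev_{\phi(0)}\simeq\ev_0$, whereas the paper simply asserts that this case suffices (which follows more directly from $\phi^*\circ\const=\const$ and 2-out-of-3).
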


\begin{proof}
  Note that $(W\c)_{*,n} = \N \tilde{\c}_n$ where
  $\tilde{\c}_n$ is a category whose
  objects are diagrams $\hat{[n]} \to \cof\c$ and
  whose morphisms are weak equivalences with all components in $g\c$.
  It is enough to consider the case $n' = 0$,
  i.e.\ to show that the constant functor
  $\const \colon \tilde{\c}_0 \to \tilde{\c}_n$
  is a homotopy equivalence.
  The evaluation at $n$ functor
  $\ev_n \colon \tilde{\c}_n \to \tilde{\c}_0$
  satisfies $\ev_n \const = \id_{\tilde{\c}_0}$.
  Moreover, the structure maps of
  every diagram $X \in \tilde{\c}_n$ form
  a natural weak equivalence $X \to \const \ev_n X$
  since every cofibration is in $g\c$.
\end{proof}

\begin{Lemma}\label{W-horizontal}
  The bisimplicial set $W\c$ is horizontally homotopically constant,
  i.e.\ every simplicial operator $[m] \to [m']$ induces
  a weak homotopy equivalence $(W\c)_{m',*} \to (W\c)_{m,*}$.
\end{Lemma}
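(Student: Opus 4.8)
The plan is to run essentially the same kind of argument as for the two preceding lemmas, constructing a homotopy inverse to an evaluation functor, but with one extra step to bypass the point where the naive reasoning fails.

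First I would set up notation dual to the previous lemma. Unwinding the definition of $W\c$ one sees that $(W\c)_{m,*} = \N\mathcal{D}_m$, where $\mathcal{D}_m$ is the category whose objects are chains of weak equivalences in $g\c$
\[ Y_0 \to Y_1 \to \dots \to Y_m \]
and whose morphisms are the natural transformations all of whose components are acyclic cofibrations; this is the ``transpose'' of the category $\tilde{\c}_n$ appearing above. In particular $\mathcal{D}_0 = w\cof\c$, so $(W\c)_{0,*} = \N w\cof\c$, and every simplicial operator $\phi\colon [m]\to[m']$ induces the nerve of the corresponding precomposition functor $\mathcal{D}_{m'}\to\mathcal{D}_m$. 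I would then reduce the lemma to the single claim that for every $m$ the functor $\const\colon w\cof\c = \mathcal{D}_0 \to \mathcal{D}_m$ sending an object to the constant chain on it is a weak homotopy equivalence on nerves. Indeed $\N\const$ is the map induced by the unique operator $[m]\to[0]$ and every vertex $[0]\to[m]$ splits it, so if $\N\const$ is a weak homotopy equivalence then by two-out-of-three so is every evaluation $\ev_a\colon \N\mathcal{D}_m\to\N\mathcal{D}_0$; since $\ev_0\circ\phi^* = \ev_{\phi(0)}$, a second application of two-out-of-three promotes this to the assertion that every simplicial operator induces a weak homotopy equivalence.

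Since $\ev_0$ is a retraction of $\const$, it suffices to connect $\const\circ\ev_0$ to $\id_{\mathcal{D}_m}$ by a natural zig-zag of weak equivalences. This is the crux, and here the situation differs from the previous lemma: in $\tilde{\c}_n$ the structure maps of a diagram are cofibrations and hence provide the required natural weak equivalence inside $\tilde{\c}_n$, whereas in $\mathcal{D}_m$ the structure maps $Y_0\to Y_i$ of a chain are only weak equivalences in $g\c$ and not acyclic cofibrations, so they are not morphisms of $\mathcal{D}_m$ and cannot be used directly. To get around this I would enlarge $\mathcal{D}_m$ to the category $\mathcal{D}_m^w$ with the same objects but with all levelwise weak equivalences in $g\c$ as morphisms. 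In $\mathcal{D}_m^w$ the structure maps do define morphisms, so the argument of the previous lemma applies and shows that $\ev_0\colon \N\mathcal{D}_m^w\to\N\mathcal{D}_0^w$ is a homotopy equivalence with inverse $\const$. It then remains to show that the inclusion $\mathcal{D}_m\hookrightarrow\mathcal{D}_m^w$ induces a weak homotopy equivalence on nerves for every $m\geq 0$ (the case $m=0$ being that $\N w\cof\c\to\N\mathcal{D}_0^w$ is an equivalence); granting this, the commuting square formed by the two inclusions $\mathcal{D}_m\hookrightarrow\mathcal{D}_m^w$ and $\mathcal{D}_0\hookrightarrow\mathcal{D}_0^w$ and the two copies of $\const$, together with two-out-of-three, completes the reduction.

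The remaining input — that $\N\mathcal{D}_m\to\N\mathcal{D}_m^w$ is a weak homotopy equivalence — is the assertion that weak equivalences in $g\c$ can be replaced by acyclic cofibrations functorially and compatibly along a chain, and this is exactly what the good-subcategory axioms and the good-cylinders hypothesis are there for. Concretely, given a chain $Y_0\to\dots\to Y_m$ I would rectify it by iteratively factoring, starting from $Z_0=Y_0$, the composite $Z_{i-1}\to Y_{i-1}\to Y_i$ as an acyclic cofibration $Z_{i-1}\cto Z_i$ followed by a weak equivalence $Z_i\weto Y_i$ via the functorial factorizations; an induction on $i$ using that morphisms of $g\c$ are stable under pushout along cofibrations and that the factorizations are compatible with $g\c$ shows that a levelwise weak equivalence in $g\c$ between chains induces levelwise weak equivalences in $g\c$ between the rectified chains, so that this is a functor, and one then interpolates between $Y_\bullet$ and $Z_\bullet$ using the cylinder functor $I$, where the good-cylinders condition that $IX\sqcup_{X\sqcup X}(Y\sqcup Y)\cto IY$ for $X\cto Y$ is precisely what makes the cylinders of a chain of cofibrations assemble again into a chain of cofibrations. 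Assembling the resulting natural zig-zag exhibits $\N\mathcal{D}_m\to\N\mathcal{D}_m^w$ as a weak homotopy equivalence. I expect the main obstacle to be the bookkeeping in this last step: arranging the rectification and the cylinder constructions to be simultaneously functorial on the categories $\mathcal{D}_m^w$ — whose morphisms are the unrestrictive weak equivalences rather than the acyclic cofibrations — and mutually compatible, through the length-induction controlled by the pushout-stability of $g\c$.
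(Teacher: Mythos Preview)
Your reduction to the constant/evaluation pair is fine, but the detour through the enlarged category $\mathcal{D}_m^w$ creates a genuine gap rather than removing one.  The case $m=0$ of your claimed equivalence $\N\mathcal{D}_m \to \N\mathcal{D}_m^w$ is literally the inclusion $\N w\cof\c \to \N wg\c$, which is the content of the \emph{next} lemma in the paper and is proved there \emph{using} the present one; your rectification sketch is vacuous for $m=0$ (there is nothing to factor), so you have not supplied an independent argument for it.  For general $m$ the rectification also does not do what you need: it turns a chain of $g\c$-weak equivalences into a chain of acyclic cofibrations, but a \emph{morphism} of $\mathcal{D}_m^w$ (a levelwise $g\c$-weak equivalence) is sent only to another levelwise $g\c$-weak equivalence between the rectified chains, not to a levelwise acyclic cofibration.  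So you do not obtain a functor $\mathcal{D}_m^w \to \mathcal{D}_m$, only an endofunctor of $\mathcal{D}_m^w$ naturally weakly equivalent to the identity, which is not enough to compare the two nerves.  The cylinder interpolation you allude to does not repair this, since one of the two objects being interpolated, namely the original chain $Y_\bullet$, is not even an object of $\mathcal{D}_m$.

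The paper avoids the whole detour by a single trick that you almost wrote down yourself.  Using $\ev_m$ rather than $\ev_0$, one does \emph{not} try to use the structure map $X_i \to X_m$ directly; instead one factors the fold map $X_i \sqcup X_m \to X_m$ functorially as $X_i \sqcup X_m \cto \tilde X_i \weto X_m$.  Both coproduct inclusions followed by the first map are then acyclic cofibrations, so one obtains a zig-zag $X \acto \tilde X \leftarrow \const X_m$ whose legs are levelwise acyclic cofibrations, i.e.\ morphisms of $\bar\c_m$ itself.  The only thing left to check is that $\tilde X$ is an object of $\bar\c_m$, i.e.\ that $\tilde X_i \to \tilde X_{i+1}$ lies in $g\c$; this is exactly the third axiom for a good subcategory applied to the square with rows $X_m \sqcup X_i \to X_m$ and $X_m \sqcup X_{i+1} \to X_m$.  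No good-cylinders hypothesis, no enlargement of the morphism class, and no appeal to the subsequent lemma are needed.
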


\begin{proof}    
  Note that $(W\c)_{m,*} = \N \bar{\c}_m$ where
  $\bar{\c}_m$ is a category whose
  objects are diagrams $\hat{[m]} \to g\c$ and
  whose morphisms are acyclic levelwise cofibrations.
  Again, it is enough to consider the case $m' = 0$
  and to show that the constant functor
  $\const \colon \bar{\c}_0 \to \bar{\c}_m$
  and the evaluation at $m$ functor
  $\ev_m \colon \bar{\c}_n \to \bar{\c}_0$
  form a homotopy equivalence.

  We have $\ev_m \const = \id_{\bar{\c}_0}$.
  Moreover, given any object $X \in \bar{\c}_m$
  and $i \in [m]$ we consider the composite weak equivalence $X_i \weto X_m$.
  We combine it with the identity $X_m \to X_m$ and
  factor functorially the resulting morphism $X_i \sqcup X_m \to X_m$ as
  $X_i \sqcup X_m \cto \tilde{X}_i \weto X_m$.
  In the square
  \begin{tikzeq*}
    \matrix[diagram]
    {
      |(mi)| X_m \sqcup X_i     & |(i)| X_m \\
      |(m1)| X_m \sqcup X_{i+1} & |(1)| X_m \\
    };

    \draw[->] (mi) to (i);
    \draw[->] (m1) to (1);
    \draw[->] (i)  to (1);
    \draw[->] (mi) to (m1);
  \end{tikzeq*}
  both vertical morphisms are in $g\c$
  (since $g\c$ is closed under pushouts).
  Thus the induced morphism $\tilde{X}_i \to \tilde{X}_{i+1}$
  is in $g\c$.
  Moreover, we obtain acyclic cofibrations
  $X_i \acto \tilde{X}_i$ and $X_m \acto \tilde{X}_i$ that
  constitute a zig-zag of natural weak equivalences connecting
  $\const \ev_m$ and $\id_{\bar{\c}_m}$.
\end{proof}

\begin{Lemma}\label{W-diagonal}
  The inclusion $\N w\cof\c \to \N wg\c$ is a weak homotopy equivalence.
\end{Lemma}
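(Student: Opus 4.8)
The plan is to deduce the lemma from the two preceding lemmas by passing to the diagonal of $W\c$. Since $W\c$ is homotopically constant in both simplicial directions, the standard fact about bisimplicial sets --- a bisimplicial set which is homotopically constant in one variable is, in that variable, levelwise equivalent to the constant bisimplicial set on its $0$-th slice, and $\diag$ preserves levelwise weak equivalences --- shows that the two maps given by iterated degeneracies
\[
  j_1\colon \N w\cof\c = (W\c)_{0,*} \longrightarrow \diag W\c,
  \qquad
  j_2\colon \N wg\c = (W\c)_{*,0} \longrightarrow \diag W\c
\]
are weak homotopy equivalences; here $j_1$ degenerates the first simplicial variable and uses \cref{W-horizontal}, while $j_2$ degenerates the second variable and uses the lemma preceding it. Note also that $w\cof\c \subseteq wg\c$, since every cofibration --- in particular every acyclic cofibration --- lies in $g\c$, so that the inclusion $i\colon \N w\cof\c \to \N wg\c$ is defined.

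Next I would write down a retraction $\pi\colon \diag W\c \to \N wg\c$. A $k$-simplex of $\diag W\c$ is a diagram $(X_{i,j})_{0\le i,j\le k}$ of the shape occurring in the definition of $W\c$, with horizontal morphisms acyclic cofibrations and vertical morphisms weak equivalences in $g\c$; let $\pi$ send it to its diagonal string
\[
  X_{0,0} \longrightarrow X_{1,1} \longrightarrow \dots \longrightarrow X_{k,k},
\]
in which $X_{l,l}\to X_{l+1,l+1}$ is the composite $X_{l,l} \cto X_{l,l+1} \to X_{l+1,l+1}$ of an acyclic cofibration followed by a weak equivalence in $g\c$. Since $g\c$ is a subcategory this composite again lies in $g\c$, and it is a weak equivalence, so $\pi$ does take values in $\N wg\c$; naturality in $[k]$ is clear because forming the diagonal string of a commutative grid is functorial in the indexing poset. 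Now $j_1$ realizes $\N w\cof\c$ as the grids all of whose vertical maps are identities, and $j_2$ realizes $\N wg\c$ as the grids all of whose horizontal maps are identities; inspecting these two families shows that $\pi \circ j_1 = i$ and $\pi \circ j_2 = \id_{\N wg\c}$ hold on the nose.

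The lemma then follows formally. From $\pi \circ j_2 = \id$ and the fact that $j_2$ is a weak homotopy equivalence, the two-out-of-three property forces $\pi$ to be a weak homotopy equivalence; hence so is $i = \pi \circ j_1$, being a composite of weak homotopy equivalences. The only step that requires genuine care is the strict commutativity of the triangles $\pi j_1 = i$ and $\pi j_2 = \id$, which is exactly what allows the formal argument to close up without introducing an auxiliary homotopy; the remaining ingredients --- the realization statement for homotopically constant bisimplicial sets, and the facts that $g\c$ is closed under composition and contains all cofibrations --- are immediate. So I do not expect any serious obstacle beyond this bookkeeping; in particular, given the two preceding lemmas, this last step uses neither the good-cylinder hypothesis nor the functorial-factorization clause in the definition of a good subcategory.
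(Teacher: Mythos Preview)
Your proof is correct and follows essentially the same route as the paper: both arguments use the two preceding lemmas to see that the degeneracy inclusions $\N w\cof\c \to \diag W\c$ and $\N wg\c \to \diag W\c$ are weak equivalences, then introduce the diagonal-restriction map $\diag W\c \to \N wg\c$ and observe that its composites with these two inclusions are respectively the inclusion $i$ and the identity, concluding by 2-out-of-3. The paper cites \cite[Proposition IV.1.7]{Goerss-Jardine} for the realization fact you call ``standard,'' and is terser about why $\pi$ lands in $\N wg\c$, but otherwise the arguments coincide.
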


\begin{proof}
  Observe that the $0$th row and the $0$th column of $W\c$ are
  $\N wg\c$ and $\N w\cof\c$ respectively.
  Since $W\c$ is homotopically constant in both directions,
  it follows from \cite[Proposition IV.1.7]{Goerss-Jardine}
  that we have weak equivalences
  \begin{tikzeq*}
    \matrix[diagram]
    {
      |(g)| \N wg\c & |(d)| \diag W\c & |(c)| \N w\cof\c \text{.} \\
    };

    \draw[->] (g) to node[above] {$\we$} (d);
    \draw[->] (c) to node[above] {$\we$} (d);
  \end{tikzeq*}
  Moreover, the restrictions along
  the diagonal inclusions $[m] \to [m] \times [m]$ induce
  a simplicial map $\diag W\c \to \N wg\c$ whose composites with
  the two maps above are the identity on $\N wg\c$ and
  the inclusion $\N w\cof\c \to \N wg\c$.
  Hence the latter is a weak equivalence by 2-out-of-3.
\end{proof}

Next we establish that under specific circumstances certain subcategories of $\c$ are good.

\begin{Lemma}\label{levelwise-good}
  Let $\c$ be a cofibration category.
  \begin{enumerate}
  \item If $\c$ has functorial factorizations,
    then $\c$ itself is a good subcategory.
  \item If $\c$ has good cylinders,
    then $\cof\c$ is a good subcategory of $\c$.
  \item If $\cof\c$ is a good subcategory of $\c$,
    then the subcategory of levelwise cofibrations is
    a good subcategory of $\c^{[k]}_\Reedy$ for all $k$.
  \end{enumerate}
\end{Lemma}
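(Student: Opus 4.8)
The plan is to take the three parts in order of increasing difficulty. Part (1) is immediate: with $g\c=\c$ the first and third defining conditions of goodness are vacuous (every morphism lies in $g\c$), the second holds because a cofibration category has pushouts along cofibrations, and the required functorial factorizations are the ones assumed. For part (2) I would use the mapping cylinder factorization of a good cylinder functor $I$ as the functorial factorization on $\c$: a map $f\colon X\to Y$ factors as $X\cto IX\sqcup_X Y\weto Y$, the cofibration part being $X\xrightarrow{i_1}IX\to IX\sqcup_X Y$ (a cofibration because it also equals the composite of the summand inclusion $X\cto X\sqcup Y$ with a cobase change of $X\sqcup X\cto IX$), and the weak equivalence part coming from the acyclic cofibration $Y\acto IX\sqcup_X Y$ (a cobase change of $i_0$) together with 2-out-of-3. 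The first two conditions of goodness for $\cof\c$ are then the cofibration category axioms that cofibrations compose and are stable under cobase change along cofibrations; the content is the third. Given a square with vertical cofibrations $A_0\cto A_1$ and $B_0\cto B_1$, one must show that $A_1\sqcup_{A_0}(IA_0\sqcup_{A_0}B_0)\to IA_1\sqcup_{A_1}B_1$ is a cofibration, and I would do this by rewriting the source, after one cobase change along $B_0\sqcup A_1\cto B_1\sqcup A_1$, as $\bigl(IA_0\sqcup_{A_0\sqcup A_0}(A_1\sqcup A_1)\bigr)\sqcup_{A_1\sqcup A_1}(B_1\sqcup A_1)$ and the target as $IA_1\sqcup_{A_1\sqcup A_1}(B_1\sqcup A_1)$; the leftover map is then the cobase change of the cofibration $IA_0\sqcup_{A_0\sqcup A_0}(A_1\sqcup A_1)\cto IA_1$ furnished by the good cylinder condition. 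So the whole map is a composite of cobase changes of cofibrations, and a short diagram chase identifies this composite with the map in question. These iterated-pushout manipulations are the bulk of the work for (2).

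For part (3) I would equip $\c^{[k]}_\Reedy$ with the inductive Reedy factorization built from the chosen factorization $Z$ on $\c$ preserving $\cof\c$: set $\tilde Y_0=Z(X_0\to Y_0)$ and inductively $\tilde Y_j=Z(X_j\sqcup_{X_{j-1}}\tilde Y_{j-1}\to Y_j)$; this is functorial in $f$ and produces a Reedy cofibration $X\cto\tilde Y$ followed by a levelwise weak equivalence. I would also invoke the standard fact that in $\c^{[k]}_\Reedy$ the pushout of an arbitrary morphism along a Reedy cofibration is the levelwise pushout and is again Reedy cofibrant — this rests on the relative-latching-map computation showing that the cobase change of a Reedy cofibration along any morphism is a Reedy cofibration. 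Granting this, the first condition of goodness — that a Reedy cofibration is a levelwise cofibration — follows by induction on $j$ from the factorization $X_j\to X_j\sqcup_{X_{j-1}}Y_{j-1}\to Y_j$, whose first map is a cobase change of the cofibration $X_{j-1}\cto Y_{j-1}$ and whose second is the relative latching map; and the second condition — stability of levelwise cofibrations under cobase change along Reedy cofibrations — follows because the pushout is then computed levelwise, so for $f$ a levelwise cofibration and $g\colon X\cto Z$ a Reedy cofibration the map $Z_j\to (Y\sqcup_X Z)_j=Y_j\sqcup_{X_j}Z_j$ is a cobase change of the cofibration $X_j\cto Y_j$.

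The third condition of goodness for $\c^{[k]}_\Reedy$ — that the Reedy factorization preserves levelwise cofibrations — is the crux, and I expect it to be the main obstacle. For a square with vertical levelwise cofibrations $u\colon A\to A'$, $v\colon B\to B'$, I would prove by induction along the poset $[k]$ that the level-$j$ component $\psi_j\colon A'_j\sqcup_{A_j}\tilde B_j\to\tilde B'_j$ of the induced map $A'\sqcup_A\tilde B\to\tilde B'$ is a cofibration; writing $L_j=A_j\sqcup_{A_{j-1}}\tilde B_{j-1}$ and $L'_j=A'_j\sqcup_{A'_{j-1}}\tilde B'_{j-1}$, so that $\tilde B_j=Z(L_j\to B_j)$ and $\tilde B'_j=Z(L'_j\to B'_j)$, the base case $j=0$ is goodness of $\cof\c$ applied to the bottom square. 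In the inductive step one cannot feed goodness the square with sides $L_j\to B_j$ and $L'_j\to B'_j$, because the transition map $L_j\to L'_j$ need not be a cofibration (it involves a relative latching map of $u$, which need not be a cofibration since $u$ is only levelwise one). Instead one introduces $Q_j=A'_j\sqcup_{A_j}L_j\cong A'_j\sqcup_{A_{j-1}}\tilde B_{j-1}$ and applies goodness of $\cof\c$ to the square with left vertical the cobase change $L_j\cto Q_j$ and right vertical $B_j\cto B'_j$, which gives a cofibration $A'_j\sqcup_{A_j}\tilde B_j=Q_j\sqcup_{L_j}\tilde B_j\cto Z(Q_j\to B'_j)$.

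To close the induction the inductive hypothesis enters as follows: one has $Q_j\cong A'_j\sqcup_{A'_{j-1}}\bigl(A'_{j-1}\sqcup_{A_{j-1}}\tilde B_{j-1}\bigr)$, and the natural map $A'_{j-1}\sqcup_{A_{j-1}}\tilde B_{j-1}\to\tilde B'_{j-1}$ is precisely $\psi_{j-1}$, a cofibration by induction; hence $Q_j\to L'_j$ is a cobase change of $\psi_{j-1}$ and is itself a cofibration, so goodness of $\cof\c$ applied to the square with left vertical $Q_j\cto L'_j$ and right vertical $\id_{B'_j}$ makes $Z(Q_j\to B'_j)\to\tilde B'_j$ a cofibration, and composing the two yields $\psi_j$. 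The delicate part throughout is the bookkeeping — checking that the various pushouts really are the asserted cobase changes of cofibrations, that the squares handed to goodness of $\cof\c$ commute (using naturality of both the cofibration and the weak equivalence parts of $Z$), and that the maps $\psi_j$ do assemble into the map $A'\sqcup_A\tilde B\to\tilde B'$ in $\c^{[k]}_\Reedy$ — and that is where essentially all of the work goes.
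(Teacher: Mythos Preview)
Your proposal is correct and follows essentially the same approach as the paper: part~(1) is vacuous, part~(2) uses the mapping cylinder factorization and reduces to a cobase change of the good-cylinder cofibration $IA_0\sqcup_{A_0\sqcup A_0}(A_1\sqcup A_1)\cto IA_1$, and part~(3) is the same induction along $[k]$ using goodness of $\cof\c$ and the inductive hypothesis. The only organizational difference is in the inductive step of~(3): the paper packages the argument by viewing the relevant squares as morphisms in $\c^{[1]}$, observes that $L_j\to L'_j$ is a cofibration (via the pushout of the back ``Reedy cofibration'' square, which is your cobase change of $\psi_{j-1}$ composed with the cobase change of $A_j\cto A'_j$), and then applies goodness once to $(L_j\to L'_j,\;B_j\to B'_j)$, whereas you factor through the auxiliary object $Z(Q_j\to B'_j)$ and apply goodness twice --- but this is the same idea unpacked.
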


\begin{proof}
  \leavevmode
  \begin{enumerate}
  \item This is vacuously true.
  \item We will show that the standard mapping cylinder factorization
    makes $\cof\c$ into a good subcategory.
    Let
    \begin{tikzeq*}
      \matrix[diagram]
      {
        |(A0)| A_0 & |(B0)| B_0 \\
        |(A1)| A_1 & |(B1)| B_1 \\
      };

      \draw[->] (A0) to (B0);
      \draw[->] (A1) to (B1);

      \draw[cof] (A0) to (A1);
      \draw[cof] (B0) to (B1);
    \end{tikzeq*}
    be a square were both vertical morphisms are cofibrations.
    The mapping cylinder of $A_i \to B_i$ is constructed as
    $I A_i \sqcup_{A_i \sqcup A_i} (A_i \sqcup B_i)$.
    We need to show that the morphism induced by the square
    \begin{tikzeq*}
      \matrix[diagram]
      {
        |(A0)| A_0 & |(I0)| I A_0 \sqcup_{A_0 \sqcup A_0} (A_0 \sqcup B_0) \\
        |(A1)| A_1 & |(I1)| I A_1 \sqcup_{A_1 \sqcup A_1} (A_1 \sqcup B_1) \\
      };

      \draw[->] (A0) to (I0);
      \draw[->] (A1) to (I1);

      \draw[->] (I0) to (I1);
      \draw[->] (A0) to (A1);
    \end{tikzeq*}
    is a cofibration.
    This morphism coincides with
    \begin{tikzeq*}
      \matrix[diagram]
      {
        |(0)| I A_0 \sqcup_{A_0 \sqcup A_0} (A_1 \sqcup B_0) &
        |(1)| I A_1 \sqcup_{A_1 \sqcup A_1} (A_1 \sqcup B_1) \\
      };

      \draw[->] (0) to (1);
    \end{tikzeq*}
    which factors as
    \begin{tikzeq*}
      \matrix[diagram]
      {
        |(0)| I A_0 \sqcup_{A_0 \sqcup A_0} (A_1 \sqcup B_0) &
        |(1)| I A_0 \sqcup_{A_0 \sqcup A_0} (A_1 \sqcup B_1) &
        |(2)| I A_1 \sqcup_{A_1 \sqcup A_1} (A_1 \sqcup B_1) \text{.} \\
      };

      \draw[->] (0) to (1);
      \draw[->] (1) to (2);
    \end{tikzeq*}
    The first morphism is a pushout of $A_1 \sqcup B_0 \to A_1 \sqcup B_1$
    which is a cofibration since $B_0 \to B_1$ is.
    The second morphism is a pushout of
    $I A_0 \sqcup_{A_0 \sqcup A_0} (A_1 \sqcup A_1) \to I A_1$
    which is a cofibration since $A_0 \to A_1$ is and $\c$ has good cylinders.
  \item Clearly, every Reedy cofibration is a levelwise cofibration and
    levelwise cofibrations are stable under pullbacks.
    Consider a diagram
    \begin{tikzeq*}
      \matrix[diagram]
      {
        |(A0)| A_0 & |(tB0)| \tilde{B}_0 & |(B0)| B_0 \\
        |(A1)| A_1 & |(tB1)| \tilde{B}_1 & |(B1)| B_1 \\
      };

      \draw[cof] (A0) to (tB0);
      \draw[cof] (A1) to (tB1);

      \draw[->] (tB0) to node[above] {$\we$} (B0);
      \draw[->] (tB1) to node[below] {$\we$} (B1);

      \draw[->] (A0) to (A1);
      \draw[->] (tB0) to (tB1);
      \draw[->] (B0) to (B1);
    \end{tikzeq*}
    in $\c^J_\Reedy$ where $\tilde{B}_0$ and $\tilde{B}_1$
    are obtained by the standard Reedy factorization induced by
    the given functorial factorization in $\c$.
    Assuming that $A_0 \to A_1$ and $B_0 \to B_1$ are levelwise cofibrations,
    we need to check that
    $A_{1,i} \sqcup_{A_{0,i}} \tilde{B}_{0,i} \to \tilde{B}_{1,i}$
    is a cofibration for every $i \in [m]$.

    For $i = 0$, this follows directly from the assumption that
    $\cof\c$ is a good subcategory of $\c$.
    The Reedy factorization is constructed by induction over $[m]$,
    so assume that the conclusion is already known for $i < m$.
    The factorization at level $i+1$ arises as
    \begin{tikzeq*}
      \matrix[diagram]
      {
        |(A0)|  A_{0,i+1} \sqcup_{A_{0,i}} \tilde{B}_{0,i} &
        |(tB0)| \tilde{B}_{0,i+1} & |(B0)| B_{0,i+1} \\
        |(A1)|  A_{1,i+1} \sqcup_{A_{1,i}} \tilde{B}_{1,i} &
        |(tB1)| \tilde{B}_{1,i+1} & |(B1)| B_{1,i+1} \\
      };

      \draw[cof] (A0) to (tB0);
      \draw[cof] (A1) to (tB1);

      \draw[->] (tB0) to node[above] {$\we$} (B0);
      \draw[->] (tB1) to node[below] {$\we$} (B1);

      \draw[->] (A0) to (A1);
      \draw[->] (tB0) to (tB1);
      \draw[->] (B0) to (B1);
    \end{tikzeq*}
    where the left square comes from the diagram
    \begin{tikzeq*}
      \matrix[diagram,row sep=2em,column sep=2em]
      {
          |(A0i)| A_{0,i}   & & |(B0i)| \tilde{B}_{0,i} &[1em] \\
        & |(A01)| A_{0,i+1} & & |(P0)|  \bullet & & |(B01)| \tilde{B}_{0,i+1} \\
          |(A1i)| A_{1,i}   & & |(B1i)| \tilde{B}_{1,i} & \\
        & |(A11)| A_{1,i+1} & & |(P1)|  \bullet & & |(B11)| \tilde{B}_{1,i+1} \\
      };

      \draw[->] (B01) to (B11);
      \draw[->] (P0)  to (B01);
      \draw[->] (P1)  to (B11);

      \draw[->] (A0i) to (B0i);
      \draw[->] (A1i) to (B1i);

      \draw[->] (A0i) to (A1i);
      \draw[->] (B0i) to (B1i);

      \draw[->,over] (A01) to (P0);
      \draw[->] (A11) to (P1);

      \draw[->,over] (A01) to (A11);
      \draw[->] (P0) to (P1);

      \draw[->] (A0i) to (A01);
      \draw[->] (B0i) to (P0);
      \draw[->] (A1i) to (A11);
      \draw[->] (B1i) to (P1);
    \end{tikzeq*}
  where the bullets stand for the pushouts above.
  The conclusion we need to obtain amounts to the composite of
  the two squares in the front being a Reedy cofibration when seen as a morphism
  from left to right.
  The right square is a Reedy cofibration since
  $\cof\c$ is a good subcategory of $\c$ and
  so is the left one since it is a pushout of the back square
  which is a Reedy cofibration by the inductive hypothesis. \qedhere
  \end{enumerate}
\end{proof}

\begin{Lemma}\label{Reedy-levelwise}
  The inclusion $\N w(\c^{[k]}_\Reedy) \to \N w(\c^{[k]})$
  is a weak homotopy equivalence.
\end{Lemma}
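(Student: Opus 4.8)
The plan is to build a \emph{functorial} Reedy cofibrant replacement on diagrams over $[k]$ and then invoke the standard principle that a functor admitting a natural weak equivalence to (and from) the identity induces a homotopy equivalence on the nerves of the underlying subcategories of weak equivalences. Since $\c$ has good cylinders (\cref{good-cylinders}), the mapping cylinder construction supplies a functorial factorization of every morphism as a cofibration followed by a weak equivalence --- the same functorial factorization already used in the proof of \cref{levelwise-good}. I would use it to construct, by induction on the degree in $[k]$, a functor $Q\colon \c^{[k]} \to \c^{[k]}_\Reedy$ together with a natural transformation $q\colon \iota Q \Rightarrow \id_{\c^{[k]}}$ whose components are levelwise weak equivalences, where $\iota\colon \c^{[k]}_\Reedy \to \c^{[k]}$ is the inclusion. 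Explicitly: put $(QX)_0 := X_0$ with $q_{X,0}=\id$, and given $(QX)_{i-1}$ together with $q_{X,i-1}\colon (QX)_{i-1}\weto X_{i-1}$, factor the composite $(QX)_{i-1}\xrightarrow{q_{X,i-1}} X_{i-1}\to X_i$ functorially as $(QX)_{i-1}\cto (QX)_i\xrightarrow{q_{X,i}} X_i$. Since the $i$-th latching object of a diagram on the poset $[k]$ is simply its value at $i-1$, the diagram $QX$ is Reedy cofibrant; functoriality of the factorization makes $Q$ a functor and the maps $q_{X,i}$ assemble into a natural transformation $q$.

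Next I would check that $Q$ preserves levelwise weak equivalences. This is a short induction: for a levelwise weak equivalence $f\colon X\to Y$ one has $(Qf)_0=f_0$, and if $(Qf)_{i-1}$ is a weak equivalence then $(Qf)_i$ is the morphism induced on functorial mapping cylinders by a square whose two remaining edges are the weak equivalences $(Qf)_{i-1}$ and $f_i$, hence a weak equivalence by the gluing lemma in cofibration categories. Therefore $Q$ restricts to a functor $w(\c^{[k]})\to w(\c^{[k]}_\Reedy)$; the transformation $q$, its components being levelwise weak equivalences, restricts to a natural transformation of endofunctors of $w(\c^{[k]})$ from $\iota Q$ to the identity; and, because $Q$ already takes Reedy cofibrant values, $q$ likewise restricts to a natural transformation of endofunctors of $w(\c^{[k]}_\Reedy)$ from $Q\iota$ to the identity. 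Applying the nerve, these two natural transformations exhibit $\N Q$ and $\N \iota$ as mutually inverse homotopy equivalences between $\N w(\c^{[k]}_\Reedy)$ and $\N w(\c^{[k]})$, which is precisely the assertion of the lemma.

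The only step that genuinely requires care --- the ``main obstacle'', though a mild one --- is arranging the Reedy cofibrant replacement to be an honest functor rather than a choice made object by object; this is exactly where the good cylinders hypothesis enters, through the functorial mapping cylinder factorization. (Alternatively one could observe that $\iota$ is an exact functor of cofibration categories satisfying the approximation properties and appeal to a general comparison theorem, but the replacement argument is self-contained and more elementary.) The remaining verifications --- that $Q$ is homotopical and that natural transformations induce homotopies after passing to nerves --- are entirely routine.
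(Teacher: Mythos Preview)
Your argument is correct and is precisely the approach the paper takes: the paper's proof is the single sentence ``Functorial factorization induces a functor in the opposite direction as well as natural weak equivalences connecting both composites with identities,'' and your proposal simply spells out this Reedy cofibrant replacement in detail. One small simplification: you do not need the gluing lemma to see that $Q$ preserves levelwise weak equivalences, since $q_{Y,i}\circ (Qf)_i = f_i\circ q_{X,i}$ and 2-out-of-3 already force $(Qf)_i$ to be a weak equivalence.
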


\begin{proof}
  Functorial factorization induces a functor in the opposite direction
  as well as natural weak equivalences
  connecting both composites with identities.
\end{proof}

\begin{proof}[Proof of \cref{cC-C}]
  Recall that we want to show that $\N w((\cof\c)^{[k]}) \to \N w(\c^{[k]})$ is
  a weak equivalence for all $k$.
  In the diagram
  \begin{tikzeq*}
    \matrix[diagram]
    {
      & |(wcR)| \N w\cof(\c^{[k]}_\Reedy)
      & |(wR)|  \N w(\c^{[k]}_\Reedy) \\
        |(w-c)| \N w((\cof\c)^{[k]})
      & |(wc)|  \N w\cof(\c^{[k]})
      & |(w)|   \N w(\c^{[k]}) \\
    };

    \draw[->] (wcR) to node[above] {\ding{172}} (wR);
    \draw[->] (wc)  to node[below] {\ding{173}} (w);
    \draw[->] (w-c) to (wc);

    \draw[->] (wcR) to node[above left] {\ding{174}} (w-c);
    \draw[->] (wcR) to (wc);
    \draw[->] (wR)  to node[right] {\ding{175}} (w);
  \end{tikzeq*}
  the indicated maps are weak equivalences.
  The map \ding{172} is a weak equivalence by \cref{W-diagonal} applied to
  $\c^{[k]}_\Reedy$ with itself as a good subcategory and
  so is \ding{173} by the same argument applied to $\c^{[k]}$.
  The map \ding{174} is a weak equivalence by \cref{W-diagonal} applied to
  $\c^{[k]}_\Reedy$ with the good subcategory of levelwise cofibrations,
  which is indeed good by \cref{levelwise-good}.
  Finally, \ding{175} is a weak equivalence by \cref{Reedy-levelwise}.
  Hence by 2-out-of-3, the bottom composite is also a weak equivalence
  as required.
\end{proof}

\begin{ackn}
The first author was supported by Wolfgang L\"ucks ERC Advanced Grant ``KL2MG-interactions'' (no.662400) granted by the European Research Council.
\end{ackn}

\bibliographystyle{amsalpha}
\bibliography{mybib-1}

\end{document}